\newtheorem{theorem}{Theorem}
\newtheorem{remark}{Remark}
\newcommand{\norm}[1]{\left\Vert#1\right\Vert}
\newcommand{\inpd}[2]{\left\langle #1, #2 \right\rangle}
\newcommand{\set}[1]{\left\{#1\right\}}
\newcommand{\Xb}{{\bar{X}}}
\newcommand{\C}{C}
\newcommand{\Real}{\mathbb {R}}
\newcommand{\tr}{\textsf{T}}
\newcommand{\eps}{\varepsilon}
\newcommand{\FW} {Freidlin-Wentzell}
\def\biz{\begin{itemize} }
\def\eiz{\end{itemize}}
\def\bena{\begin{enumerate}[<+-| alert@+>]}
\def\ben{\begin{enumerate}}
\def\een{\end{enumerate}}
 \newcommand{\mJ}{\mathbb{J}}
 \newcommand{\N}{\mathbb{N}}
 \newcommand{\M}{\mathbb{M}}
\begin{document}
 
\title {Simplified Gentlest Ascent Dynamics for
Saddle Points in Non-gradient Systems }

\author{Shuting Gu}
\email{shutinggu2-c@my.cityu.edu.hk}
\affiliation{Department of Mathematics, City University of Hong Kong, 
Tat Chee Ave, Kowloon, Hong Kong SAR
}

\author{Xiang Zhou}
\email{xiang.zhou@city.edu.hk}
\thanks{Corresponding author.
The research of XZ  was supported by the grants from the Research Grants Council of the
Hong Kong Special Administrative Region, China (Project No. CityU 11304715 and 11337216).  }%
\affiliation{Department of Mathematics, City University of Hong Kong, Tat Chee Ave, Kowloon, Hong Kong SAR}


\date{\today}

\begin{abstract} The gentlest ascent dynamics (GAD) 
 ({\it Nonlinearity}, vol. 24, no. 6, p1831, 2011)  is a continuous time  dynamics
 coupling both the position and the direction variables to efficiently locate the
 saddle point with a given index. 
 These  saddle points play important roles
 in the   activated process of the randomly perturbed 
 dynamical systems.
 For index-1 saddle points in non-gradient systems,  the GAD
 requires two direction variables to approximate the  eigenvectors of the Jacobian matrix and its transpose, respectively,
 while in the gradient systems, these two directions collapse to be the single min mode of the Hessian matrix.
 In this note,  we present a simplified GAD which only  needs one direction variable  even for non-gradient systems.
 This new method not only  reduces computational cost for directions by half,
 but also  can avoid inconvenient operations on  the transpose of Jacobian matrix.
  We prove  the same convergence property for the simplified GAD
   as for the original GAD.  The motivation of our simplified GAD is
 its formal analogy  to the Hamiltonian dynamics  governing the exit dynamics
when the system is perturbed by small  noise.
Several non-gradient examples are presented to demonstrate our method,
including the two dimensional models and
the Allen-Cahn equation in the presence of shear flow.

\end{abstract}

\pacs{05.40, 05.70.Ln,82.40.Bj }
\keywords{saddle point, rare event, non-gradient system}

\maketitle 


\section{Introduction}

 Locating the saddle points has been of broad interest in many areas of scientific applications,
 especially for the understanding the exit process leaving 
 from linearly stable states when a dynamical system is randomly perturbed.
 In computational chemistry\cite{Energylanscapes},  one of the most important objects on the potential energy surface
 is the transition state which  is  the  saddle point with index 1, {\it i.e.},
the  unstable manifold
 is exactly one dimensional.
Such transition states are the bottlenecks on the most probable transition paths
between different local wells that describe  the random hoppings on the potential surface.
  The steepest descent flow that minimizes   the potential energy
  gives arise to gradient dynamical systems.
  For such gradient systems,
   a large amount of  numerical methods have been
developed to locate their saddle points, such as the eigenvector following method\cite{Crippen1971}, the dimer method \cite{Dimer1999} and the gentlest ascent dynamics(GAD) \cite{GAD2011,SamantaGAD}, the iterative minimization algorithm \cite{IMF2014,IMA2015} and others\cite{ART1998,DuSIAM2012}.

While most of these methods were designed for the gradient systems,
there are few of them applicable to  the non-gradient systems, which 
arise from many models in biology and fluid dynamics\cite{KS-WZE2009,WanYE_NS2dMAM_2015}.
One prominent example\cite{AO1997,MH2008} is the phase filed model such as the Allen-Cahn equation associated with a double-well potential, but  subject to  the influence of  shear flow.
The extra forcing from the fluid  certainly makes the gradient system become  a non-gradient model.
The saddle points in such non-gradient systems are still of  great importance 
 since they may be also relevant  to the non-equilibrium process in the 
randomly perturbed dynamical systems  \cite{FW2012}.

Among many saddle search methods mentioned previously, only  the GAD\cite{GAD2011}    proposed  by one of the authors in this note 
is capable to  
address the saddle point in  general dynamical systems,
by using two eigenvectors and  oblique projection. This result
  extends the saddle point search method to the non-gradient systems.
 In this note, we present a new form  of the  gentlest ascent dynamics  associated with the  following non-gradient system
 \begin{equation}\label{non-grad}
  \dot{x} = b(x),
 \end{equation}
 where $b$ is a smooth vector field  in $\Real^d$.
We are  interested in the index-1 saddle point of the vector field $b$.
 To locate the index-1 saddle point in    equation \eqref{non-grad}, the original GAD \cite{GAD2011} evolves a position variable $x$ and two direction variables $v$ and $w$
 so that the linearly stable state states of this new dynamics are index-1 saddle point of $b$.
The dynamics of $v$ and $w$ in the GAD needs
the product of   the Jacobian matrix $Db(x)$ and its transpose $Db(x)^\tr$
with $v$ and $w$, respectively.
 The matrix-vector multiplication $Db(x)v=\lim_{h\to 0} (b(x+hv)-b(x))/h$ can be
 easily approximated  by the finite difference method. But the difficult comes from the calculation of the transposed term $Db(x)^\tr w$, which
 lacks the interpretation of the directional derivative of $b(x)$.
In our simplified GAD below, we shall show that it suffices to use
the dynamics of {\it  one} directional variable (either $v$ or $w$), without affecting the convergence property of the original GAD.

Despite the simple form of our result,
we find  an interesting connection between the simplified GAD
and the underlying Hamilton's equation describing  the  optimal transition path in the randomly perturbed equation:
\begin{equation}\label{SDE}
d X = b(X)dt + \sqrt{\eps}dW,
\end{equation} where $W$ is the standard Brownian motion and $\eps$ is a small constant.
Indeed, the study of rare events in the system \eqref{SDE} is the most important motivation  to study the saddle points of the vector field $b$.
By the Freidlin-Wentzell large deviation theory\cite{FW2012}, the most probable transition path is a minimizer of the Freidlin-Wentzell action functional and this path, as a function of time,  satisfies the Hamilton's  equation with the zero Hamiltonian. The position and momentum in the Hamilton's equation might be   thought as  the counterpart  of the position   and   direction   in the GAD.
This formal analogy is indeed our original inspiration to derive our simplified GAD.

 The rest of the paper is organized as follows. In Section \ref{sec_GAD_Ham}, we propose the simplified GAD for non-gradient systems
 after  a short review of the  GAD. Then
 we explore  the relation between the simplified GAD and the Hamilton's dynamics.
 In addition, we apply the simplified GAD to the multiscale model of non-gradient slow-fast systems.
 Section \ref{sec_numer_ex} is our numerical examples.
In particular, we study  the Allen-Cahn equation in the presence of shear flow
and investigate how the shear  rate  affects  the transitions states in this system.
 The conclusions and discussions are given in Section \ref{sec_conclu}.

\section{ Method}\label{sec_GAD_Ham}

\subsection{Review of Gentlest Ascent Dynamics (GAD)}
The GAD in \cite{GAD2011} for the flow $\dot{x}(t) = b(x)$ involves a position variable $x$ and two direction variables $v$ and $w$ as follows:
\begin{subequations}\label{GAD}
\begin{empheq}[left=\empheqlbrace]{align}
  \dot{x}(t) &= b(x) -2 \frac{\inpd{b(x)}{ w }}{\inpd{w}{v} }v,\label{GAD-x}\\
  \gamma  \dot{v}(t) &=  J(x)v - \alpha v, \label{GAD-v}\\
 \gamma \dot{w}(t)&=  J(x)^\tr w - \beta w,  \label{GAD-w}
\end{empheq}
\end{subequations}
where $J(x) = D b(x)$ is the Jacobian matrix $\left(D b\right)_{ij} \doteq
\frac{\partial b_i}{\partial x_j}$, which is generally asymmetric.
$\alpha$ and $\beta$ are the Lagrangian multipliers to impose
certain  normalization conditions for $v$ and $w$. For instance,
if the normalization condition is  $\inpd{v}{v}\equiv \inpd{w}{v}\equiv 1$, then   $\alpha=\inpd{v} {J(x)v}$ and $\beta=2\inpd{w}{J(x) v}-\alpha$.  Equation \eqref{GAD}
is a flow in $\Real^{3d}$.

As a special case, the GAD for a gradient system $\dot{x}(t) = -\nabla V(x)$ only involves $v$:
\begin{subequations}
\begin{center}
\begin{empheq}[left=\empheqlbrace]{align}
  \dot{x}(t) &= -\nabla V(x) + 2 \frac{\inpd{\nabla V(x)}{ v}}{\inpd{v}{v}} v,\label{GAD-g-x}\\
\gamma \dot{v}(t) & =   - \nabla^2 V(x)v + \inpd{v} {\nabla^2 V(x)v}v. \label{GAD-g-v}
 \end{empheq}
\end{center}
\label{GAD-g}
\end{subequations}
 $\gamma > 0 $ is the relaxation parameter. A large $\gamma$ means
 a fast relaxation for the direction variable
 $v(t)$ toward to the steady state.
 For a frozen $x$, this steady state is the min mode of the Hessian $\nabla^2 V(x)$:
 the eigenvector corresponding to the smallest eigenvalue of $\nabla^2 V(x)$.

One of the authors \cite{GAD2011} proves   that
the above GAD (the general form \eqref{GAD} and the gradient form \eqref{GAD-g}) has the property that its  stable critical point  corresponds to  an index-1 saddle point of the original dynamics, $\dot{x}=b(x)$ or  $ \dot{x}=-\nabla V(x)$.
Our simplified GAD has the exactly same property, which  will  be given  below in details.

In the   GAD \eqref{GAD} for non-gradient systems, both $J(x)v$ and $J(x)^\tr w$ in \eqref{GAD-v} and \eqref{GAD-w} must be calculated.
One can apply the finite difference scheme to compute the matrix-vector multiplication $J(x)v$. But this trick could not be applied to the term $J(x)^\tr w $.  It can only be obtained by a numerical transpose operation.
The matrix-vector multiplication $J(x)^\tr w $ may impose a   severe computational challenge
for large scale problems.

\subsection{ Simplified GAD }\label{simpGAD}
Our new GAD takes {\it one} of the following two forms
(not simultaneously):
\begin{subnumcases}{\label{HGADv}}
 \dot{x}  =  b(x)-2 \inpd{b(x)}{v(t)} v(t) / \norm{v(t)}^2,
 \\
   \dot{v}  =  J (x) v - \inpd{v}{J  v} v,
\end{subnumcases}
or
\begin{subnumcases}{\label{HGADw}}
 \dot{x}  =  b(x)-2 \inpd{b(x)}{w(t)} w(t) / \norm{w(t)}^2,
 \\
   \dot{w}  =  J^\tr(x) w - \inpd{w}{J^\tr w} w.\label{HGADw_w}
\end{subnumcases}
So, the simplified GAD is always a flow in $\Real^{2d}$.
Initially,
$\norm{v_0}=1$ or  $\norm{w_0}=1$ so that
$v$ and $w$ are always unit vectors.
The difference between \eqref{HGADv} and \eqref{HGADw} is the matrix-vector multiplication $J(x)v$ or $J(x)^\tr w$.
As discussed above, to avoid computing $J(x)^\tr w$,
one prefers the equation  \eqref{HGADv}  for the simplified GAD in practice.
It will be seen later that in theory,  equation \eqref{HGADw}
may be  of more interest.
For the gradient system $\dot{x}(t) = -\nabla V(x)$, $J=-H$,
where $H=\nabla^2 V=H^\tr$ is the Hessian matrix, the above two forms
are identical and  become the  GAD \eqref{GAD-g} for the gradient system.

\begin{remark}\label{rm1}
 A positive constant $\tau$ can be used  in the simplified  GAD: 
   $\dot{v}\to \tau \dot{v}$ ( or $\dot{w}\to \tau \dot{w}$ as in equation \eqref{GAD}) ,
 to represent the time scale ratio between $x$ and $v$ ( or $w$).
 We drop this factor to ease the presentation.
\end{remark}

The simplified GAD \eqref{HGADv} or \eqref{HGADw} converges to the index-1 saddle point of the original dynamics $\dot{x} = b(x)$; see the following theorem.
The proof is quite similar to that for the original GAD
\cite{GAD2011}.

\begin{theorem}\label{theorem1}

(a) If $(x_*, v_*)$ is a fixed point of the simplified GAD \eqref{HGADv}, and $v_*$ is the normalized vector, $\|v_*\| = 1$, then $v_*$ is the eigenvector of $J(x_*)$ corresponding to an eigenvalue $\lambda_*$, {\it i.e.},
$$ J(x_*)v_* = \lambda_* v_*, $$
and $x_*$ is a fixed point of the original dynamics system, {\it i.e.}, $b(x_*) = 0$;

(b) Let $x_s$ be a fixed point of the dynamical system $\dot{x} = b(x)$. If the Jacobian matrix $J(x_s)$ has $n$ distinct real eigenvalues $\lambda_1, \lambda_2, \cdots, \lambda_n$ corresponding to the $n$ linearly independent eigenvectors $v_i$, {\it i.e.}, $$J(x_s)v_i = \lambda_i v_i, i = 1,2,\cdots,n$$  and $\|v_i\| = 1, \forall i$. Then $(x_s,v_i), \forall i$, is a fixed point of the simplified GAD \eqref{HGADv}. Furthermore, there is one fixed point $(x_s,v_{i^\prime})$ among these $n$ fixed points, which is linearly stable if and only if $x_s$ is an index-1 saddle point of the original dynamical system $\dot{x}=b(x)$ and the eigenvalue $\lambda_{i^\prime}$ corresponding to $v_{i^\prime}$ is the only positive eigenvalue of $J(x_s)$.

\end{theorem}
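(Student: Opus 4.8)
The plan is to prove (a) by direct manipulation of the fixed-point equations and (b) by a linear stability analysis that reduces, via a block-triangular structure, to computing the spectra of two $d\times d$ matrices.

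For part (a), I would first set the right-hand side of the direction equation in \eqref{HGADv} to zero: this gives $J(x_*)v_* = \inpd{v_*}{J(x_*)v_*}\,v_*$, so $v_*$ is an eigenvector of $J(x_*)$ with eigenvalue $\lambda_* = \inpd{v_*}{J(x_*)v_*}$. Setting the position equation to zero and using $\norm{v_*}=1$ gives $b(x_*) = 2\inpd{b(x_*)}{v_*}\,v_*$; taking the inner product of both sides with $v_*$ forces $\inpd{b(x_*)}{v_*}=0$, hence $b(x_*)=0$. The existence claim in (b) is then immediate: substituting $b(x_s)=0$, $J(x_s)v_i=\lambda_i v_i$ and $\norm{v_i}=1$ into \eqref{HGADv} makes both right-hand sides vanish.

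The substance of (b) is the stability criterion, which I would obtain by linearizing the vector field $(x,v)\mapsto(F,G)$ of \eqref{HGADv} at $(x_s,v_{i'})$ and splitting the $2d\times 2d$ Jacobian into four $d\times d$ blocks. The key simplification I expect is that the block $\partial_v F$ vanishes at the fixed point, because every term in it carries a factor $b(x_s)$ or $\inpd{b(x_s)}{v_{i'}}$, both of which are zero. The linearization is therefore block lower-triangular, its spectrum is the union of the spectra of the two diagonal blocks, and the off-diagonal block $\partial_x G$ (which would involve second derivatives of $b$) never enters. A short calculation identifies the diagonal blocks as $A=(\eye-2P)J(x_s)$ and $B=(\eye-P)J(x_s)-\lambda_{i'}(\eye+P)$, where $P=v_{i'}v_{i'}^{\tr}$ is the rank-one projector onto $v_{i'}$ and $\eye-2P$ is the associated Householder reflection.

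To extract the eigenvalues I would expand in the eigenbasis $\{v_j\}$ of $J(x_s)$, using $J(x_s)v_j=\lambda_j v_j$ and $Pv_j=\inpd{v_{i'}}{v_j}\,v_{i'}$. One finds $A v_j=\lambda_j v_j-2\lambda_j\inpd{v_{i'}}{v_j}v_{i'}$ and an analogous expression for $B$, so each block maps $v_j$ into $\mathrm{span}\{v_j,v_{i'}\}$; in the $\{v_j\}$ basis the matrices are diagonal apart from off-diagonal entries confined to the single row indexed by $i'$. Expanding the characteristic determinant along column $i'$ then factors it completely, yielding spectrum $\{-\lambda_{i'}\}\cup\{\lambda_j:j\neq i'\}$ for $A$ and $\{-2\lambda_{i'}\}\cup\{\lambda_j-\lambda_{i'}:j\neq i'\}$ for $B$. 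Assembling these, $A$ is stable exactly when $\lambda_{i'}>0$ and $\lambda_j<0$ for all $j\neq i'$, while $B$ is stable exactly when $\lambda_{i'}>0$ is the largest eigenvalue; since the former condition implies the latter, the full fixed point is stable precisely when $\lambda_{i'}$ is the unique positive eigenvalue of $J(x_s)$, i.e. when $x_s$ is an index-1 saddle. The main obstacle I anticipate is that, because $J$ is asymmetric, its eigenvectors are not orthogonal and so $Pv_j\neq0$ for $j\neq i'$; the reflection and the projection genuinely mix the modes. What rescues the computation is that $P$ has rank one with range $\mathrm{span}\{v_{i'}\}$, so all the mixing is confined to a single row of each block, which is exactly what makes the two characteristic polynomials factor.
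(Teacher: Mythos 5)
Your proposal is correct and follows essentially the same route as the paper: part (a) is identical, and for part (b) the paper likewise exploits the block lower-triangular structure of the linearization at $(x_s,v_{i'})$ (the $\partial_v\dot x$ block vanishing because $b(x_s)=0$) and reads off the spectra $\{-\lambda_{i'}\}\cup\{\lambda_j\}_{j\neq i'}$ and $\{-2\lambda_{i'}\}\cup\{\lambda_j-\lambda_{i'}\}_{j\neq i'}$ of the two diagonal blocks. If anything you are slightly more careful on one point: the paper records the first block as $\N_1=J-2\lambda_{i'}v_{i'}v_{i'}^{\tr}$ and writes $\N_1 v_j=\lambda_j v_j$ as if the $v_j$ were orthogonal, whereas your $(\eye-2v_{i'}v_{i'}^{\tr})J$ is the actual derivative and your rank-one, single-row mixing argument justifies the same eigenvalue list without assuming orthogonality.
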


\begin{proof}
(a) By the condition that $(x_*, v_*)$ is a fixed point of the simplified GAD \eqref{HGADv}, we have

\begin{subnumcases}{\label{cond}}
  b(x_*) - 2 \inpd{b(x_*)}{v_*} v_* = 0, \label{cond1}
 \\
 J (x_*) v_* = \inpd{v_*}{J(x_*)  v_*} v_* . \label{cond2}
\end{subnumcases}
Equation \eqref{cond2} implies that $v_*$ is the eigenvector of $J(x_*)$ corresponding to the eigenvalue $\lambda_* \doteq \inpd{v_*}{J(x_*)  v_*}.$

Making inner product with $v_*$ on both sides of \eqref{cond1}, we can get
$$ \inpd{b(x_*)}{v_*} - 2 \inpd{b(x_*)}{v_*} \inpd{v_*}{v_*} = 0 . $$
Since $\|v_*\|=1$, we have $\inpd{b(x_*)}{v_*} = 0$. Thus $b(x_*)=0$ by \eqref{cond1}.

(b) Since $x_s$ is a fixed point of the system $\dot{x} = b(x)$, we have $b(x_s)=0$, thus
\begin{equation}\label{eq_b}
b(x_s)-2 \inpd{b(x_s)}{v_i} v_i / \norm{v_i}^2 = 0, \quad i = 1,2,\cdots,n.
\end{equation}
Since $J(x_s)v_i = \lambda_i v_i$, by taking inner product with $v_i$ on both sides and using the condition $\|v_i\|=1$, we get $\lambda_i = \inpd{J(x_s) v_i}{v_i}$, and
\begin{equation}\label{eq_v}
J(x_s)v_i - \inpd{J(x_s) v_i}{v_i} v_i = 0, \quad i = 1,2,\cdots,n.
\end{equation}
Equation \eqref{eq_b} and \eqref{eq_v} imply that $(x_s,v_i)$ is the fixed point of the simplified GAD \eqref{HGADv} for all $i= 1,2,\cdots,n$.

Next, we write down the eigenvalues and corresponding eigenvectors of the Jacobian matrix of the simplified GAD at any fixed point $(x_s, v_i)$. First, the Jacobian matrix of the simplified GAD \eqref{HGADv} has the following expression:
\begin{equation}\label{J1_GAD}
\tilde{\mJ}(x_s,v_i)=\\
\begin{bmatrix}
\N_1:=J - 2 \lambda_i v_i v_i^\tr, &
0 \\
* , & \M_1:=J - \lambda_i - v_i v_i^\tr (\lambda_i+J)
\end{bmatrix}.
\end{equation}
The eigenvalues of $\tilde{\mJ}$ can be obtained from the eigenvalues of its two diagonal blocks $\N_1$ and $\M_1$. It can be verified that

\begin{align*}
\N_1 v_i & = J v_i - 2\lambda_i v_i v_i^\tr v_i = -\lambda_i v_i,\\
\N_1 v_j & = J v_j - 2\lambda_i v_i v_i^\tr v_j = \lambda_j v_j,\\
\M_1 v_i &= J v_i - \lambda_i v_i - v_i v_i^\tr (\lambda_i+J) v_i = -2 \lambda_i v_i,
\end{align*}

and
\[
\begin{split}
& \M_1 (v_j - (v_j^\tr v_i)v_i) = \M v_j - (v_j^\tr v_i)\M v_i\\
&= (J - \lambda_i - v_i v_i^\tr(\lambda_i + J) )v_j + 2\lambda_i (v_j^\tr v_i) v_i \\
&= (\lambda_j - \lambda_i ) v_j - v_i (\lambda_i + \lambda_j) v_i^\tr v_j + 2\lambda_i (v_j^\tr v_i) v_i\\
&= (\lambda_j - \lambda_i ) v_j - v_i (\lambda_j - \lambda_i) v_i^\tr v_j \\
&= (\lambda_j - \lambda_i ) (v_j - (v_i^\tr v_j)v_i ).
\end{split}
\]
 Hence the eigenvalues of the Jacobian matrix $\tilde{\mJ}$ at any fixed points $(x_s,v_i), i=1,2,\cdots,n$ are
\begin{equation}\label{eigvalue}
-2\lambda_i, -\lambda_i, \{\lambda_j: j\neq i \}, \{\lambda_j-\lambda_i: j\neq i \}.
\end{equation}
The linear stability condition is that all the above eigenvalues of $\tilde{\mJ}$ are negative. Thus one fixed point $(x_s, v_{i^\prime})$ is linearly stable if and only if $\lambda_{i^\prime} > 0 $ and all other eigenvalues $\lambda_j < 0$ for $j \neq i^\prime$. In this case, the fixed point $x_s$ is an index-1 saddle point of the system $\dot{x} = b(x)$.

\end{proof}

\begin{remark}\label{rm2}
Theorem \ref{theorem1} also holds for the simplified GAD \eqref{HGADw}. In this case,  the Jacobian matrix of the simplified GAD \eqref{HGADw} becomes
\begin{equation}\label{J2_GAD}
\tilde{\mJ}(x_s,w_i)=
\begin{bmatrix}
\N_2 :=J - 2 \lambda_i w_i w_i^\tr, &
0 \\
* , & \M_2 :=J^\tr - \lambda_i - w_i w_i^\tr (\lambda_i + J^\tr )
\end{bmatrix}
\end{equation}
with the same eigenvalues \eqref{eigvalue} as the Jacobian matrix of the simplified GAD \eqref{HGADv}.

\end{remark}

\subsection{Relation with Hamilton's  equation}
In this part, we discuss the Hamilton's equation
associated with the rare event study of the equation \eqref{SDE}.
According to the \FW\ large deviation principle (LDP)   \cite{FW2012}, as the noise amplitude $\eps$ in equation \eqref{SDE} tends to zero, the most probable transition path over the time interval $[0, T]$  of the system \eqref{SDE} is the minimizer of the
following \FW\ action functional \begin{equation}
S[\phi] = \int_0^T L(\phi,\dot{\phi}) dt,
\end{equation}
where the Lagrangian $L(x,y)$ is defined as
\begin{equation}
L(x,y) := \frac{1}{2} \inpd{y-b(x)}{y-b(x)}.
\end{equation}
$\inpd{\cdot}{\cdot}$ is the inner product in $\Real^d$.
The Hamiltonian $H(x,p)$, as the conjugate of   the Lagrangian $L(x,y)$, is
\begin{equation}
H(x,p) = \inpd{b(x)}{p} + \inpd{p}{p}/2.
\end{equation}
It is well-known that the minimizer of $S[\phi]$,
denoted as $x(t)$, satisfies the Hamilton's  equations
 \begin{subnumcases}{\label{HODE}}
 \dot{x}  =  H_p=b(x) + p(t),\label{HODE-x}
 \\
   \dot{p}  = -H_x=- J(x)^\tr p(t),
\end{subnumcases}
where  $p(t)$ is the (generalized) momentum.
 $J(x)= D b(x)$ is the Jacobian matrix we have used before in the GAD.
The eigenvalues of $J(x)$ are denoted as $\set{\lambda_i}$.
Equation \eqref{HODE} looks  superficially  similar to equation \eqref{HGADw} with 
two   differences:  (i) the signs before $J(x)^\tr$
are the opposite and
(ii) the momentum $p$ is not a unit vector
as the direction variable $w$.
In fact, the critical point of \eqref{HODE} is $(x_*,p_*)$
where $b(x_*)=0$ and $p_*=0$ by assuming that $J(x_*)$ is non-degenerate.
Assume $J(x)$ has the right-eigenvectors $v_i$ and the left-eigenvectors $w_i$:
 $$Jv_i=\lambda_i v_i, \mbox{ and }~   J^\tr w_i = \lambda_i w_i, ~~ 1\leq i\leq d,$$
where all eigenvalues are assumed distinctive
and the left or right eigenvectors both form a basis of $\Real^d$.
We introduce the normalized unit vector $u$ to represent the direction of $p$.
Define the scalar $l\doteq \norm{p}^2$, then $u=p/\sqrt{l}$ and
$\dot{l}=2\inpd{p}{\dot{p}}=-2\inpd{p}{J^\tr p} = -2l \inpd{u}{J^\tr u}$.
So,
\begin{equation}\label{166}
\dot{u}=\frac{d}{dt}\left(\frac{p}{\sqrt{l}}\right)= -J^\tr (x)u +\inpd{u}{J^\tr u} u.
\end{equation}
By the important  zero-Hamiltonian condition $H\equiv 0$ (\cite{FW2012}), we have
$$l=\norm{p}^2 = -2\inpd{b}{p}= -2\sqrt{l} \inpd{b}{u};$$
that is,
$$ l=0, \quad \text{or} \quad \sqrt{l}=-2\inpd{b(x)}{u}. $$
$l=0$ means $p=0$, which corresponds to the original dynamics $\dot{x}=b(x)$.
$l$ is not always zero for the exit dynamics,
then $\sqrt{l}=-2\inpd{b(x)}{u}$ and the equation \eqref{HODE-x} becomes
\begin{equation}\label{168}
 \dot{x} = b(x) + \sqrt{l} u= b(x)-2 \inpd{b(x)}{u} u.
 \end{equation}

So far, by \eqref{166} and \eqref{168}, we get the momentum-normalized  version for the Hamilton's equation \eqref{HODE} restricted on the zero-$H$ hypersurface:
\begin{subnumcases}{\label{HODEn}}
 \dot{x}  =  b(x)-2 \inpd{b(x)}{u(t)} u(t) / \norm{u(t)}^2,
 \label{HODEn_x} \\
   \dot{u}  =  -J^\tr(x) u + \inpd{u}{J^\tr u} u.\label{HODEn_w}
\end{subnumcases}
$\norm{u_0}=1$ is assumed.
Note that this dynamics \eqref{HODEn} is
not  exactly identical  to  the original Hamilton's equation \eqref{HODE}
since the branch of $p\equiv 0$ has been discarded.

Now, the only difference between the Hamilton's
equation \eqref{HODEn} and the simplified GAD \eqref{HGADw}
is the opposite sign on the right hand sides of  \eqref{HODEn_w}
and  \eqref{HGADw_w}. 
By Remark \ref{rm2}, the Jacobian matrix of  \eqref{HODEn}
is $\begin{bmatrix} \N_2, & 0 \\ *, & -\M_2\end{bmatrix}$, whose eigenvalues are
$
-\lambda_i , 2\lambda_i , \set{\lambda_j, j\neq i} , \set{\lambda_i -\lambda_j,  j\neq i}.
$
The position dynamics in \eqref{HGADw} and \eqref{HODEn}    have the same form
of applying the projection matrix $I - 2 ww^\tr $ or $I-2uu^\tr$ in front of the original force $b(x)$.
The difference is which direction they select.
If  $x$  were frozen,  the $w$ dynamics in equation \eqref{HGADw_w}
picks up the
  least   stable direction
  while the Hamilton equation's  momentum direction uses
the most stable direction.
Thus the GAD \eqref{HGADw} can converge to  the saddle point of the vector field $b(x)$ while
the Hamiltonian dynamics  \eqref{HODEn} has no stable steady state.
So one may view the simplified GAD
as a modification of the Hamilton's equation by  flipping the sign of  the (normalized)
momentum to stablized the saddle point.
 Note that  although we can introduce a factor $\gamma$ for \eqref{HGADw_w} as shown in Remark \ref{rm1}
 to speed up the clock for the direction dynamics,
 there is no  such a freedom for the Hamilton's equation \eqref{HODEn_w}.

\subsection{ Application  to multiscale model}
The GAD was extended to the slow-fast stochastic system in \cite{MsGAD2017}
and the resulted method is called MsGAD.
As a corollary of our result,
 the simplified GAD here can  be   applied to this multi-scale model straightforwardly.
 For the backgrounds and more details, the reader can refer to
\cite{MsGAD2017}.
We here directly present  the scheme based on the
above simplified GAD.
 The slow-fast system in consideration is
\begin{subequations}{ \label{XY}}
 \begin{empheq}[left={ \empheqlbrace\,}]{align}
\dot {X}^\eps(t) & = f(X^\eps,Y^\eps) , \label{X}    \\
 \dot {Y}^\eps(t) &=  \frac{1}{\eps}b (X^\eps,Y^\eps)   + \frac{1}{\sqrt{\eps}} \sigma(X^\eps,Y^\eps) \eta(t),  \label{Y}
     \end{empheq}
 \end{subequations}
 where    $\eps$ is a small parameter
 and $\eta$ is the noisy perturbation.
 $X^\eps$ is {the} slow variable and $Y^\eps$ is {the} fast variable. When $\eps$ goes to zero, the effective dynamics of the slow variable is
 \begin{equation} \label{Xbar}
 \dot{\Xb} = F(\Xb), ~~\mbox{ where }  F(x) \doteq \int f(x,y) \mu_x(dy),
 \end{equation}
 where  $\mu_x(dy)$ is the invariant measure of the fast process  with the density function denoted by $\rho(x,y)$.
  $F$ usually does not have analytical form. The simplified multiscale GAD
 for the saddle point of equation \eqref{Xbar} is
\begin{subequations}\label{MsGAD1}
\begin{center}
\begin{empheq}[left=\empheqlbrace]{align}
  \dot{x}^\eps(t) &= f(x^\eps,y^\eps) -2 \frac{\inpd{f(x^\eps,y^\eps)}{ v^\eps }}{\inpd{v^\eps}{v^\eps}} v^\eps,\label{MsGAD1-x}\\
   \dot {y}^\eps(t) &=  \frac{1}{\eps} b (x^\eps,y^\eps)   + \frac{\sigma(x^\eps,y^\eps)}{\sqrt{\eps}} \eta(t) ,\label{MsGAD1-y}\\
 \dot{v}^\eps(t) &= \left(D_x f (x^\eps,y^\eps)  + \C(x^\eps,y^\eps) \right)v^\eps
  - \alpha^\eps v^\eps, \label{MsGAD1-v}
\end{empheq}
\end{center}
\label{GAD1}
\end{subequations}
or

\begin{subequations}\label{MsGAD2}
\begin{center}
\begin{empheq}[left=\empheqlbrace]{align}
  \dot{x}^\eps(t) &= f(x^\eps,y^\eps) -2 \frac{\inpd{f(x^\eps,y^\eps)}{ w^\eps }}{\inpd{w^\eps}{w^\eps}} w^\eps,\label{MsGAD2-x}\\
   \dot {y}^\eps(t) &=  \frac{1}{\eps} b (x^\eps,y^\eps)   + \frac{\sigma(x^\eps,y^\eps)}{\sqrt{\eps}} \eta(t) ,\label{MsGAD2-y}\\
 \dot{w}^\eps(t)&=\left( D_x f (x^\eps,y^\eps) +   \C(x^\eps,y^\eps)  \right )^\tr  w^\eps -\beta^\eps w^\eps, \label{MsGAD2-w}
\end{empheq}
\end{center}
\label{GAD1}
\end{subequations}
where $D_x f(x,y)$ is the Jacobian matrix of $f(x,y)$ with respect to $x$. $\alpha = \inpd{v}{(D_x f + C )v}, \beta = \inpd{w}{(D_x f + C)^T w}$,
 $C(x,y) =  (f(x,y) -F(x))   \otimes ( g(x,y) - G(x))$,  $g(x,y)=-\nabla_x U(x,y), U(x,y)=-\log \rho(x,y)$ and $G(x)=\int g(x,y)\mu_x(dy)$.
 
\section{Numerical examples}\label{sec_numer_ex}

\subsection{A two-dimensional deterministic system}
The first test is to find the saddle point of the following two dimensional non-gradient system 

\begin{equation}\label{eg1-ave}
\dot{x}_i = - \sum_{j=1}D_{ij}x_j + \frac{\sigma^2}{2}\Gamma_i(x), \quad i=1,2,
\end{equation}
where
$\sigma^2=10$,
$D=\begin{bmatrix} 0.8 & -0.3  \\ -0.2 & 0.5 \end{bmatrix}
$ and $\Gamma_i(x)= \left( 1+ (x_i-5)^2 \right)^{-1},~ i = 1,2.$
 This dynamics has two stable fixed points $m_1 = (0.5931, 0.7655), m_2 = (5.8770, 6.2507)$ and
   a unique saddle point  $s = (1.7954, 3.3088) $.
   Figure \ref{traj_HGAD}  shows the simplified GAD trajectories of the $x$ component (solid lines) starting from $m_1$ and $m_2$ respectively.

\begin{figure}[!htb]
\begin{center}
  \includegraphics[scale=0.36]{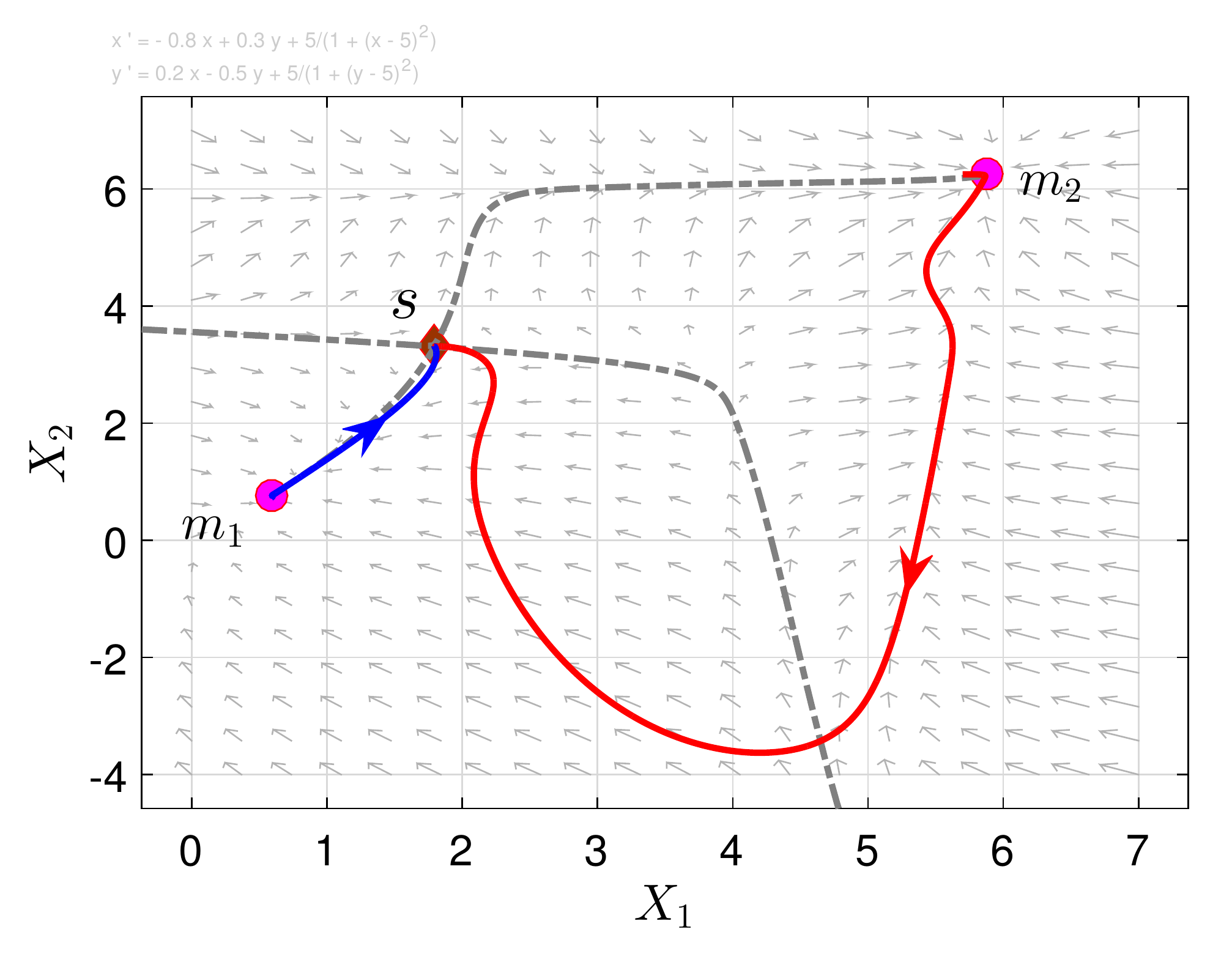}
\end{center}
 \caption{GAD trajectories of the $x$ component from two locally stable fixed points
 ($m_1$ and $m_2$) to the saddle point $s$.
The flow indicated by the  arrows corresponds to  the non-gradient  system \eqref{eg1-ave}.
The dash-dotted curves are the stable/unstable manifolds of the saddle point;
they determine  the basin boundaries of the two stable fixed points.
The blue and red curves with arrows   are the trajectories of the $x$ component for the simplified GAD applied to
the dynamics  \eqref{eg1-ave} with the initial vector $v=[1,0]$ and $[0,1]$, respectively.
}
\label{traj_HGAD}
 \end{figure}

\subsection{A two-dimensional slow-fast system}
Consider a slow-fast system in \cite{MsGAD2017},
\begin{subequations}
\begin{center}
\begin{empheq}[left=\empheqlbrace]{align}
  \dot{x}_i   &= - \sum_{j}D_{ij}x_j + y_i^2,  \label{eg1-x}\\
 \dot{y}_i & =   - \frac{1}{\eps}\frac{ y_i}{ \Gamma_i(x)}  + \frac{1}{\sqrt{\eps}}\sigma \eta(t),\label{eg1-y}
 \end{empheq}
\end{center}
\label{eg1}
\end{subequations}
where
$D=\begin{bmatrix} 0.8 & -0.2  \\ -0.2 & 0.5 \end{bmatrix}
$, which is different from the $D$ matrix in the first example
\eqref{eg1-ave}. $\sigma^2$ and  $\Gamma_i(x)$ are the same as in the first example. $\eta$ is the standard white noise.
 We are interested in the saddle point of the   effective dynamics which is the limit of \eqref{eg1} as $\eps\to0$.
 For this special case, it happens to have
 the following closed form for the effective dynamics
\begin{equation}\label{eg2-ave}
\dot{\Xb}_i = - \sum_{j}D_{ij}\Xb_j + \frac{\sigma^2}{2}\Gamma_i(\Xb).
\end{equation}
 Equation \eqref{eg2-ave} has three
 stable  fixed points $m_1 = (0.4643, 0.6985), m_2 = (2.2038, 5.9804)$ and $m_3 = (5.7109, 6.2369)$
 as well as  two saddle points   $s_1 = (1.2842, 3.4484), s_2 = (3.5689, 6.0735)$. Refer to  Figure \ref{traj_Ms}.
 To test our method, we use
  the  heterogeneous multiscale method(HMM)  to solve the simplified MsGAD \eqref{MsGAD1}
   numerically,
 without using any information of the
 analytical form in equation \eqref{eg2-ave}.
  Figure \ref{traj_Ms} shows four GAD trajectories of the $x$ component (black solid lines) with  different initial values.

 \begin{figure}[!htb]
\begin{center}
\includegraphics[scale=0.36]{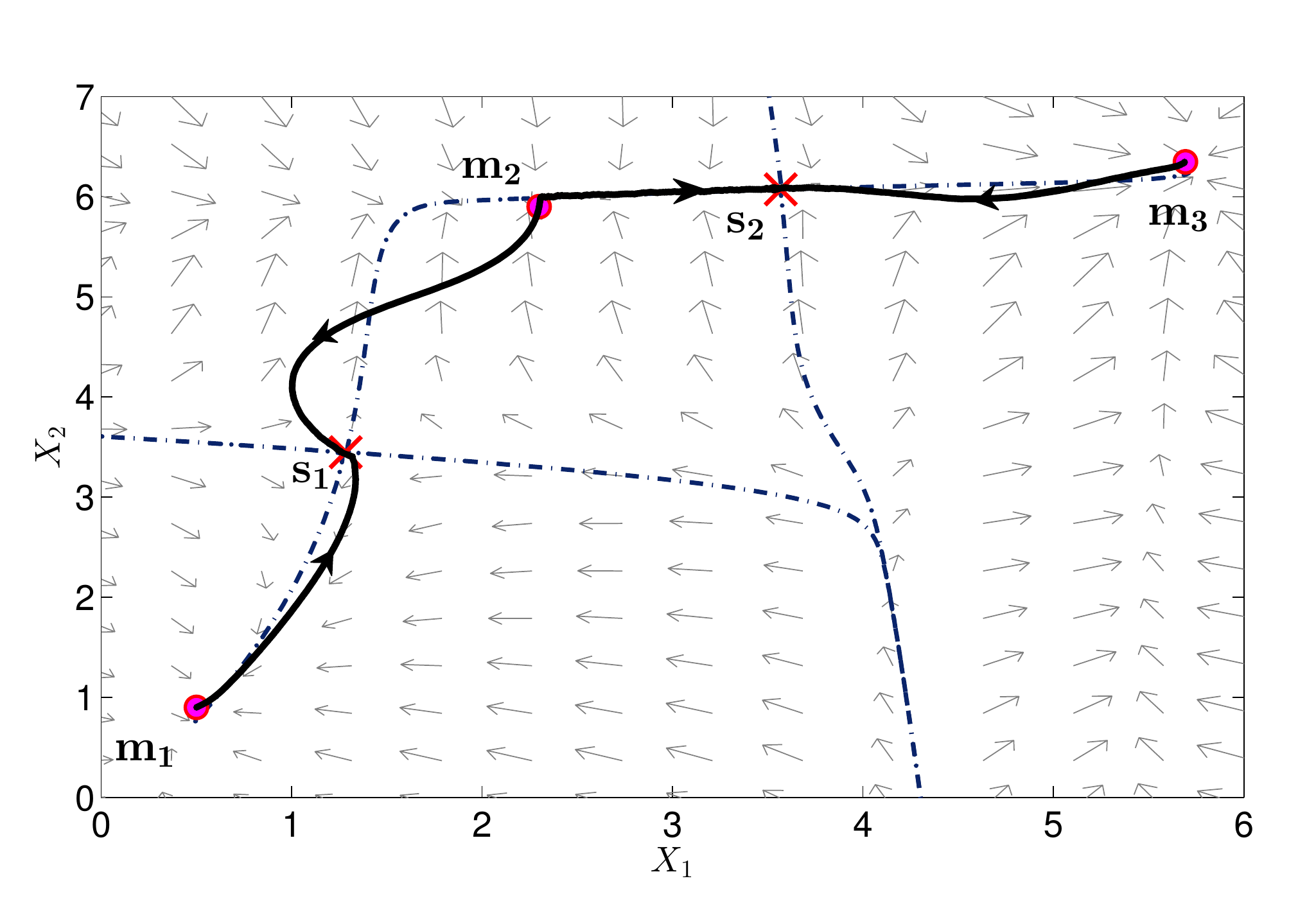}
\end{center}
 \caption{GAD trajectories from three
 stable fixed points
 ($m_1, m_2$ and $m_3$) to two different saddle points ($s_1$ and $s_2$).
The flow indicated by the  arrows corresponds to  the
effective  dynamics \eqref{eg2-ave}.
The dash-dotted curves are the stable/unstable manifolds of the two saddle points under the flow \eqref{eg2-ave}.
The black solid curves with arrows marked are the trajectories of the simplified MsGAD by the HMM.}
\label{traj_Ms}
 \end{figure}

\subsection{Nucleation in the presence of shear  flow}
As the last example, we consider   a more challenging problem: the nucleation in the
reaction-diffusion equation in the presence of shear. Nucleation   is a very important physical phenomenon \cite{MH2008,AO1997, LLinProj2010, AR1998, LLQ2007} and the nucleus  is usually described by the saddle point of the Ginzburg-Landau free energy. In
 the case of gradient systems
 purely driven by the free energy, the string method \cite{String2002,String2007} can be applied to calculate the minimum energy path.
 When the shear flow field
 is in presence,
 one is faced with a non-gradient systems
 and in principle, one needs the minimum action method
 \cite{MAM2004} to compute the minimum action path and the minimal action \cite{MH2008}. The saddle point can be extracted  after the whole path is computed.
  By our new method, however, the saddle point in the shear flow case can be calculated directly.
The Ginzburg-Landau free energy
 of the order parameter  $\phi(x,y)$ is 
\begin{equation}
E(\phi) = \int_\Omega   \frac{\kappa}{2} |\nabla\phi|^2 + \frac{1}{4}(1-\phi^2)^2 ~dx dy,
\end{equation}
where $\kappa=0.01$,  the domain $\Omega = [0,1]\times [0,1] $. The periodic boundary condition is considered.
We study two cases of the shear flow as  illustrated in
 Figure \ref{fig:2D_shear_flow}, then
 the corresponding  dynamics of the Allen-Cahn equation in the presence of the shears  are
 \begin{equation}\label{GL_dyna}
\partial_t \phi = -\frac{\delta E}{\delta\phi} + \gamma \sin(2\pi y) \partial_x \phi,
\end{equation}
and
\begin{equation}\label{GL_dyna_new}
\partial_t \phi = -\frac{\delta E}{\delta\phi} + \gamma \sin(2\pi y) \partial_x \phi + \gamma \sin(2\pi x)\partial_y \phi,
\end{equation}
respectively, where $\gamma$ is the shear rate and
the Fr\'echlet derivative $ \delta_\phi E = -\kappa \Delta \phi - \phi + \phi^3. $

\begin{figure}[htbp]
\begin{center}
\begin{subfigure}[b]{0.2\textwidth}
\includegraphics[width=\textwidth]{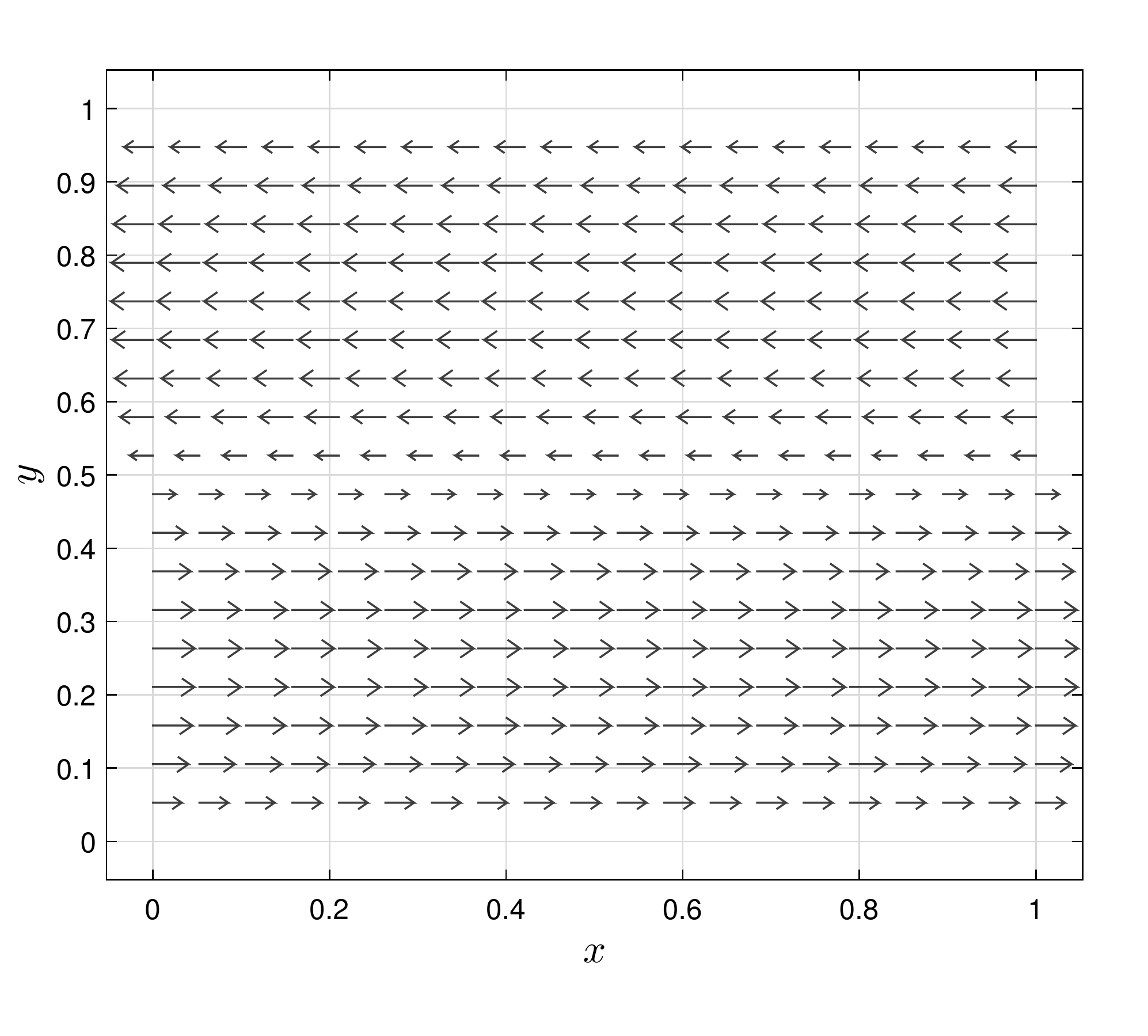}
 \label{fig:shear1}
\end{subfigure}
\begin{subfigure}[b]{0.2\textwidth}
\includegraphics[width=\textwidth]{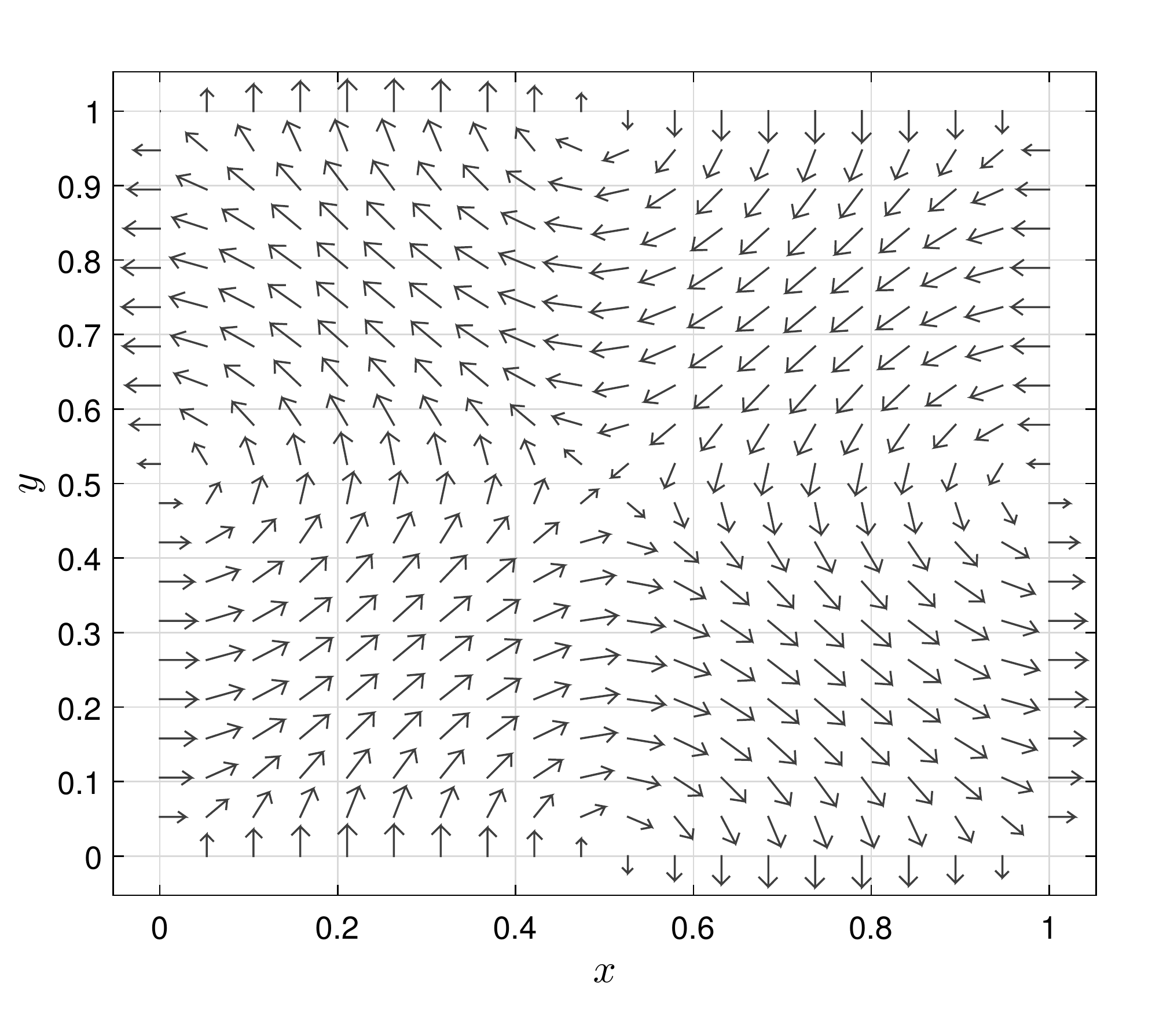}
 \label{fig:shear2}
\end{subfigure}
\caption{Vector fields of the two types of  shear flows.
}
\label{fig:2D_shear_flow}
\end{center}
\end{figure}

We want to locate the index-1 saddle point in the dynamics \eqref{GL_dyna} and \eqref{GL_dyna_new} by the simplified GAD in Section \ref{simpGAD}.
Denote the right hand side in \eqref{GL_dyna} or \eqref{GL_dyna_new} by $b(\phi)$, the simplified GAD \eqref{HGADv} in this case is
\begin{subnumcases}{\label{GL_GAD}}
 \partial_t \phi  =  b(\phi)-2 \inpd{ b(\phi)}{v} v / \norm{v}^2,\label{GL_GAD_phi}
 \\
   \partial_t v  =  D b (\phi) v - \inpd{v}{(D b)  v} v/\norm{v}^2,\label{GL_GAD_v}
\end{subnumcases}
  where $v=v(x,t)$ and $\inpd{\cdot}{\cdot}$, $\norm{\cdot}$ is the $L^2$ inner product and norm in space.

\begin{remark}
Here the dynamics is a PDE model
and we can have the analytical expression for the
Jacobian and its transpose.  We take the case in equation
\eqref{GL_dyna}  as an example to show $Db$
and its adjoint $(Db)^\tr$.
$b(\phi)=\kappa \Delta \phi + \phi - \phi^3 + \gamma \sin(2\pi y)\partial_x \phi$.
$Db(\phi) v =\kappa \Delta v   + v - 3 \phi^2 v  +\gamma \sin(2\pi y) \partial_x v $.
Then
$(Db(\phi))^\tr w =\kappa \Delta w   + w - 3 \phi^2 w  -\gamma \sin(2\pi y) \partial_x w $
since the adjoint of $\partial_x$ is $-\partial_x$.
This example shows that when $b$ is a differential operator,
one may obtain the ``transpose''  (adjoint) of the Jacobian   analytically.
Then the two forms of the simplified GAD \eqref{HGADv} and \eqref{HGADw} are both applicable in such cases.
\end{remark}

 In the numerical test, we use the mesh point $N_x = N_y = 128$
 in the finite difference method
 for spatial discretization.
   The two metastable states are always $\phi \equiv 1$ and $\phi\equiv -1$ regardless of the shear flow.  By solving the simplified GAD \eqref{GL_GAD}, we get different
   index-1 saddle points  for various $\gamma$.
     We are interested in 
the impact of shear flow
  on the profiles of the saddle point.
It is noted that  the steady states for any shear 
preserve the symmetry $\phi \to -\phi$
and 
 equation \eqref{GL_dyna_new}
additionally preserves the second symmetry  
$\phi(x,y)\to \phi(y,x)$. So there are multiple
symmetric  images 
for the same steady states.
All of our plots below  show  only one of the symmetric images. 

\begin{figure}[htbp]
\begin{center}
\begin{subfigure}[b]{0.18\textwidth}
\includegraphics[width=\textwidth]{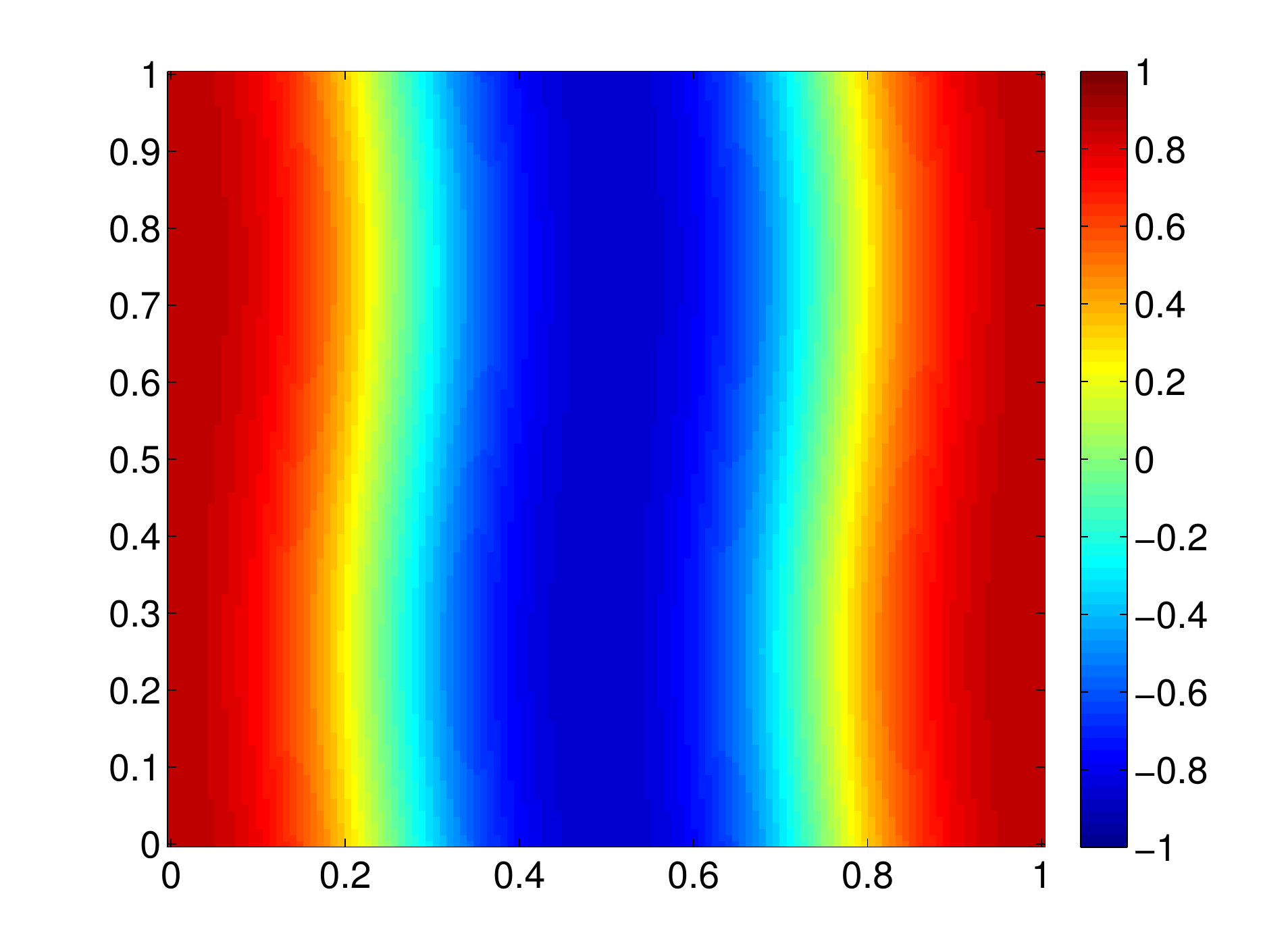}
\caption{$\gamma=0.005$}
\label{fig:1stshear0005}
\end{subfigure}
\begin{subfigure}[b]{0.18\textwidth}
\includegraphics[width=\textwidth]{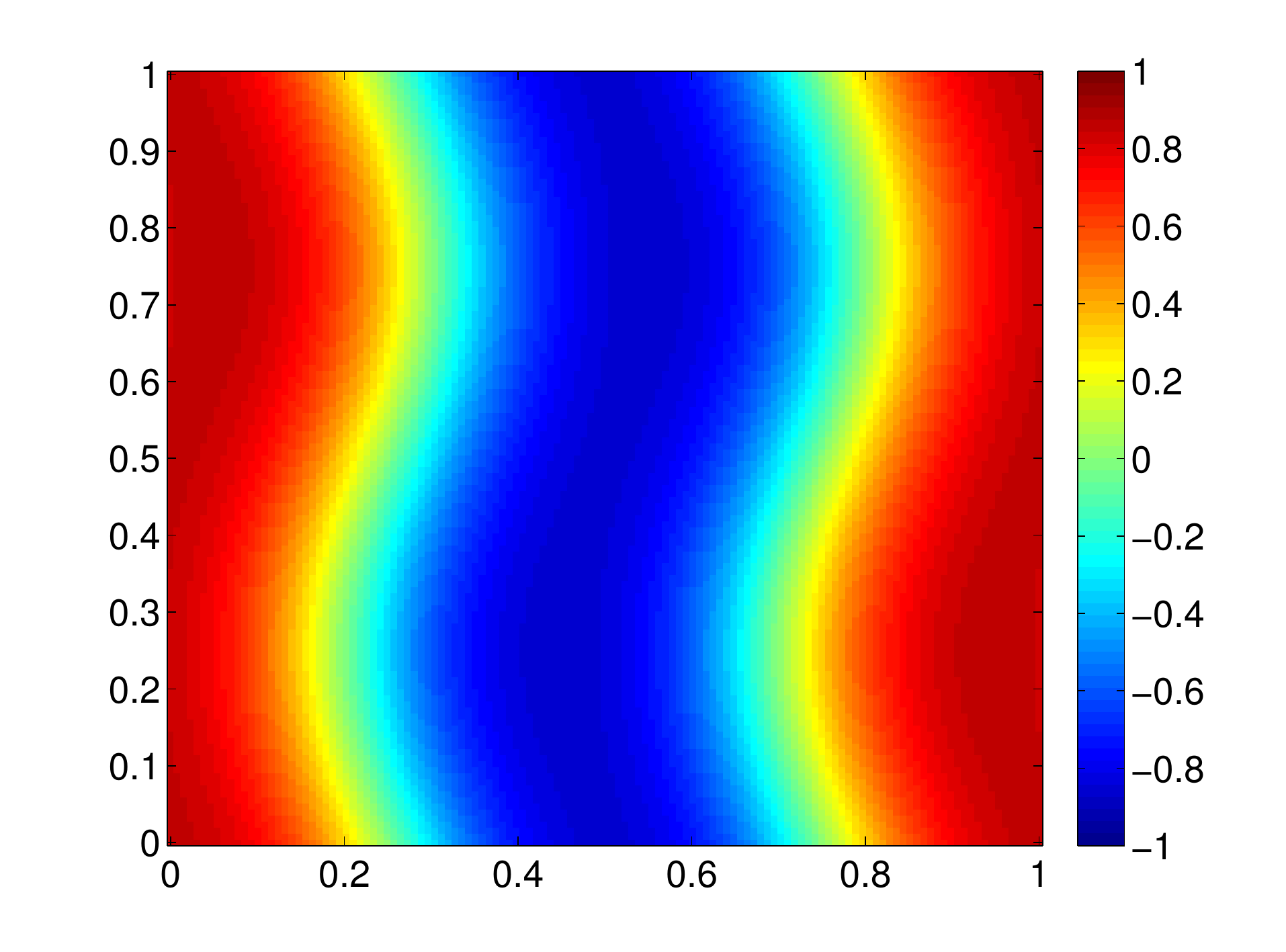}
\caption{$\gamma=0.02$}
\label{fig:1stshear002}
\end{subfigure}
\begin{subfigure}[b]{0.18\textwidth}
\includegraphics[width=\textwidth]{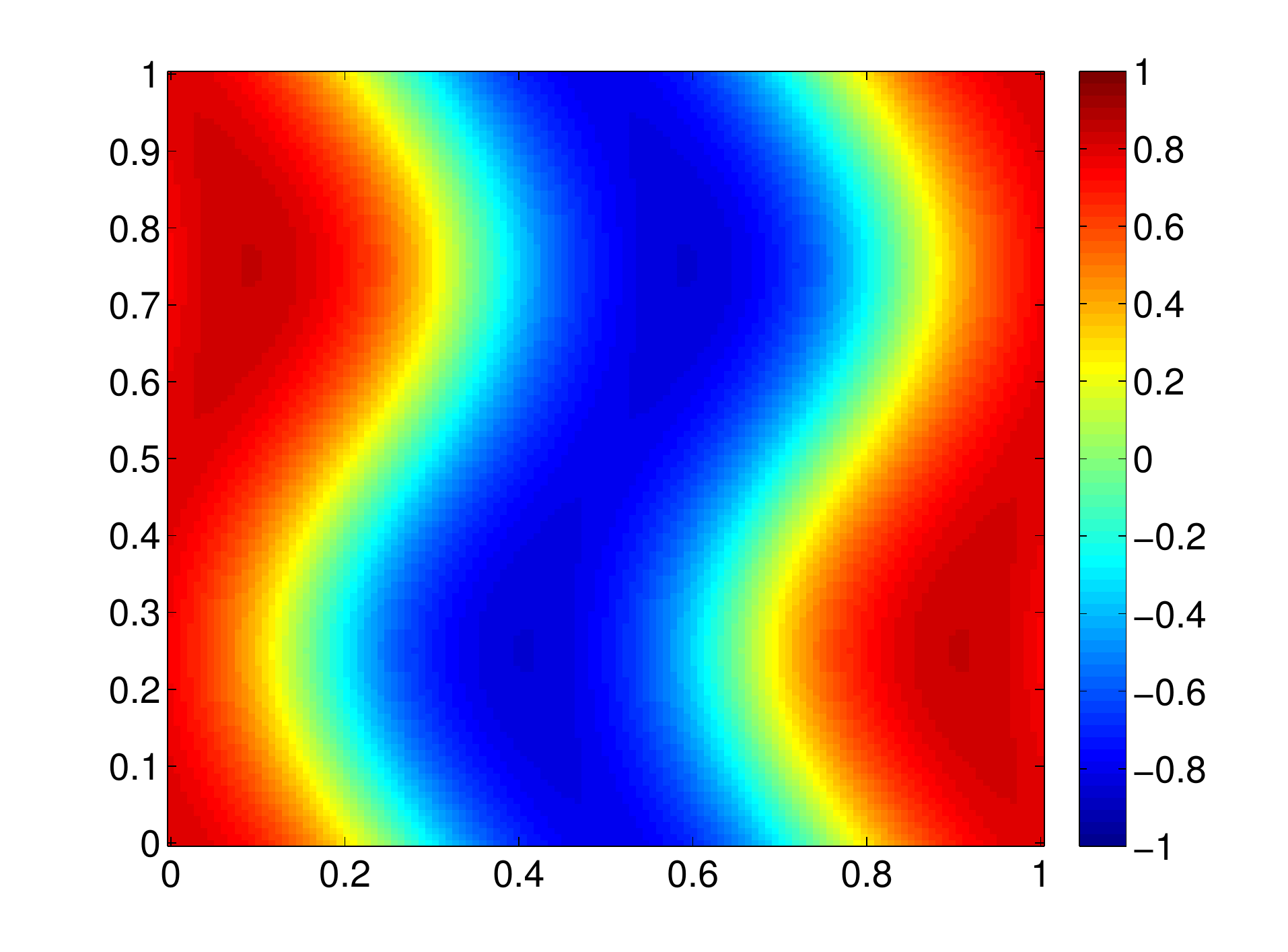}
\caption{$\gamma=0.035$}
\label{fig:1stshear0035}
\end{subfigure}
\begin{subfigure}[b]{0.18\textwidth}
\includegraphics[width=\textwidth]{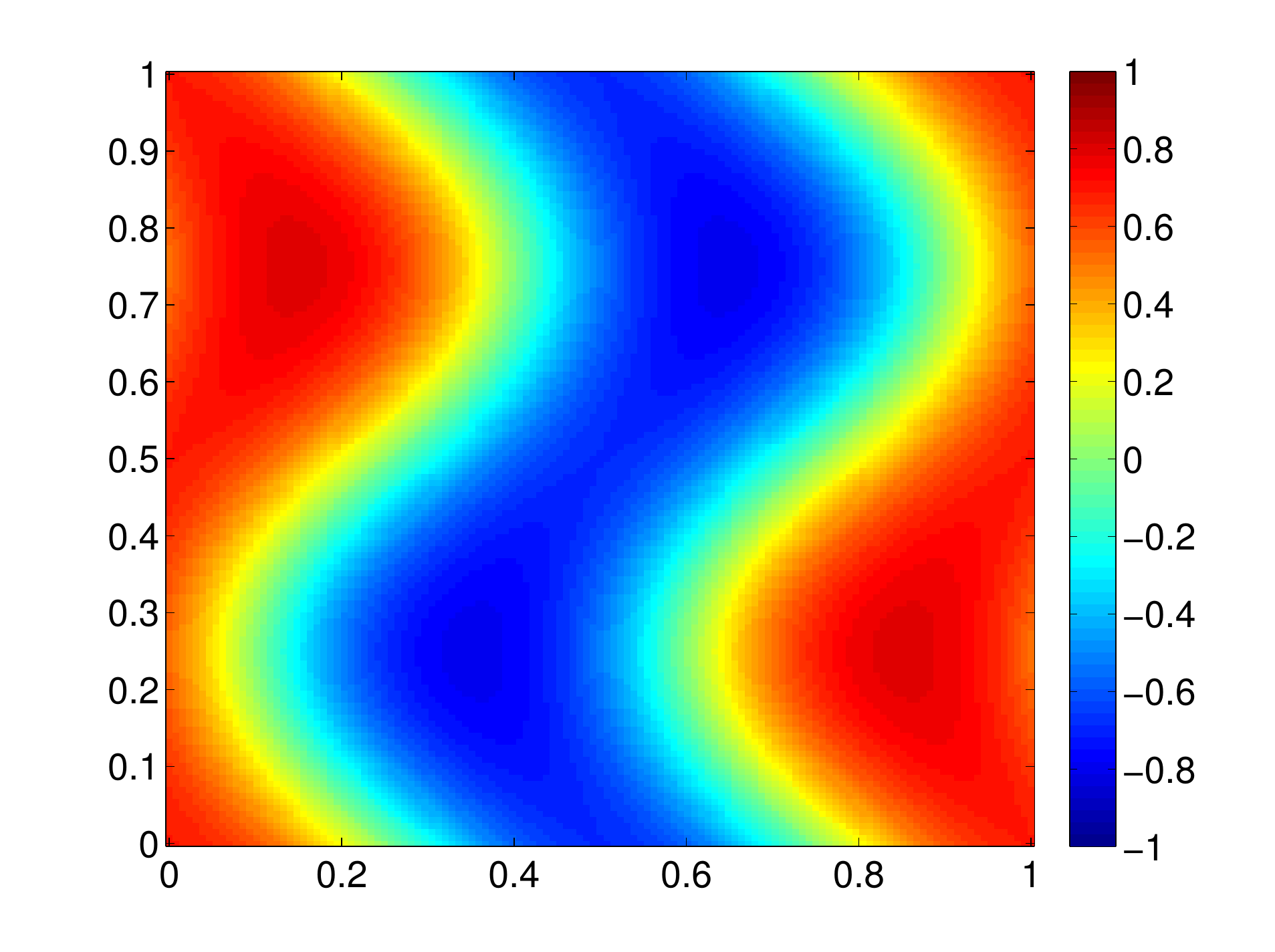}
\caption{$\gamma=0.05$}
\label{fig:1stshear005}
\end{subfigure}
\begin{subfigure}[b]{0.18\textwidth}
\includegraphics[width=\textwidth]{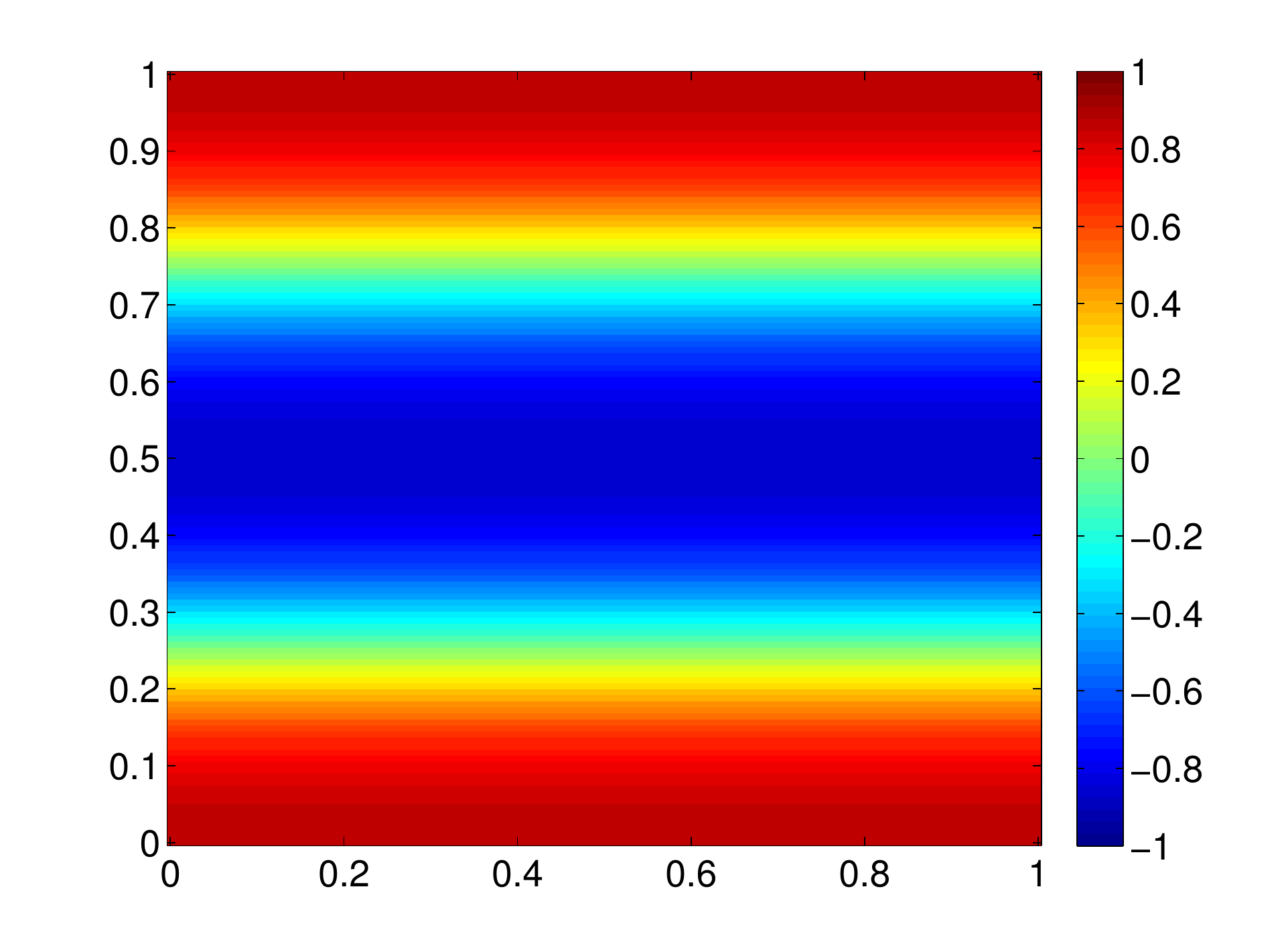}
\caption{$\gamma=0.065$}
\label{fig:1stshear0065}
\end{subfigure}
\begin{subfigure}[b]{0.18\textwidth}
\includegraphics[width=\textwidth]{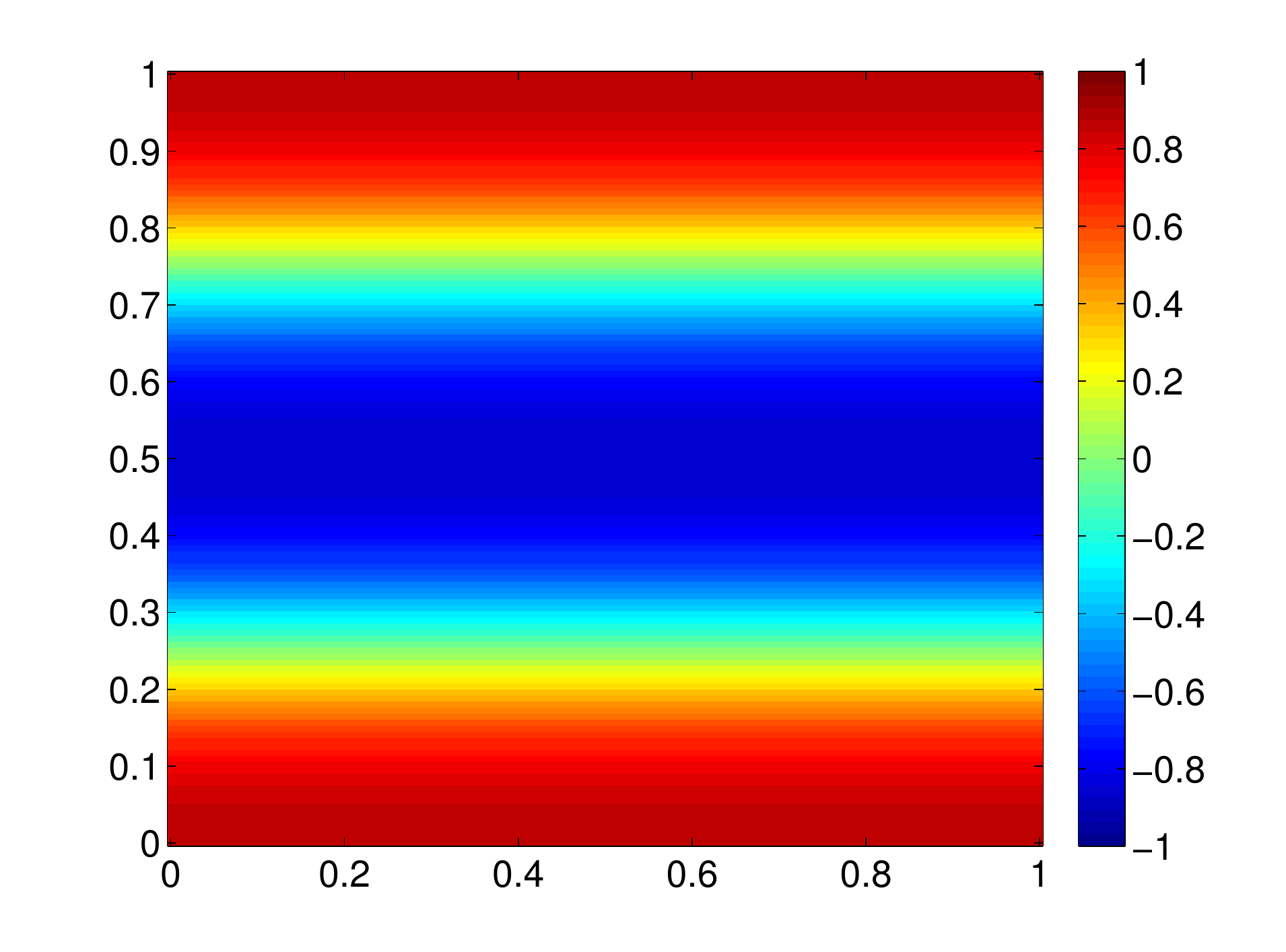}
\caption{$\gamma=0.08$}
\label{fig:1stshear008}
\end{subfigure}
\caption{Saddle points    for the model \eqref{GL_dyna}.
}
\label{fig:2D_saddle}
\end{center}
\end{figure}

   For the first case in equation \eqref{GL_dyna}, the shear    force exists only in the $x$ direction.
   As $\gamma$ increases, the sequence of the profiles
   of the saddle point is  shown in Figure \ref{fig:2D_saddle}. We   have the following interesting observations from this figure:
   the profiles of the saddle points
   get more and more stretched along the shear direction
   until a  lamellar phase
   is attained for   $\gamma$ large enough.
   In fact,   the lamellar phase shown in the last two subfigures   (Figure \ref{fig:1stshear0065} and \ref{fig:1stshear008}) is always a saddle point for
   any value of  $\gamma$.
   It seems to have a critical $\gamma_*$ between $0.05$   and $0.065$  such that
  for $\gamma<\gamma_*$, there are two saddles:
  one is   twisted   and the other is  lamellar,
  and for $\gamma>\gamma_*$, it seems only
  one index-1 saddle point, the lamellar phase.
   To determine which saddle point
   has the minimal action of escape a
   metastable state for a specific $\gamma<\gamma_*$,
   one needs to further run the minimum action method as in    \cite{MH2008}.

\begin{figure}[htbp]
\begin{center}
\begin{subfigure}[b]{0.18\textwidth}
\includegraphics[width=\textwidth]{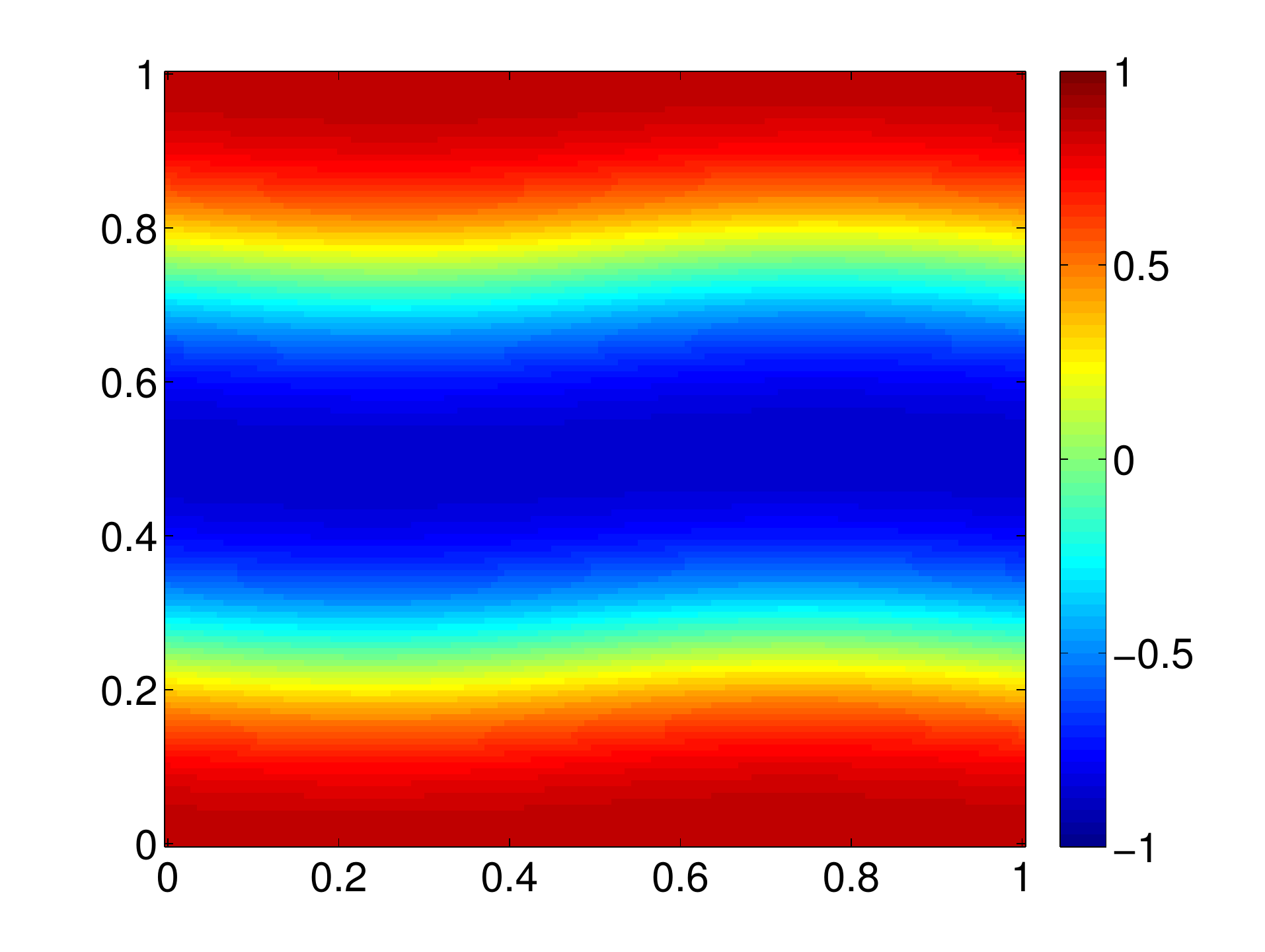}
\caption{$\gamma=0.005$}
\label{fig:2ndshear0005}
\end{subfigure}
\begin{subfigure}[b]{0.18\textwidth}
\includegraphics[width=\textwidth]{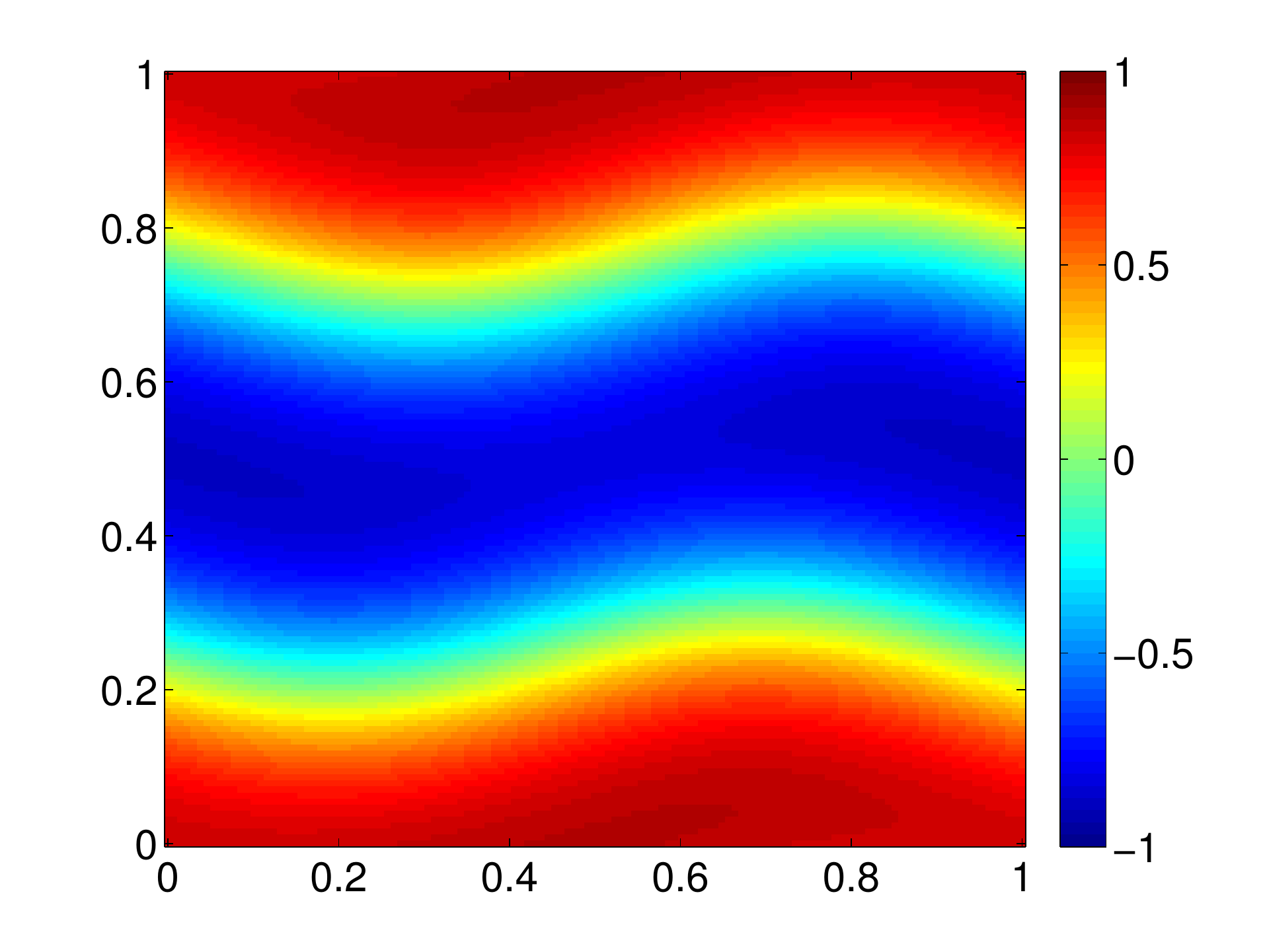}
\caption{$\gamma=0.02$}
\label{fig:2ndshear002}
\end{subfigure}
\begin{subfigure}[b]{0.18\textwidth}
\includegraphics[width=\textwidth]{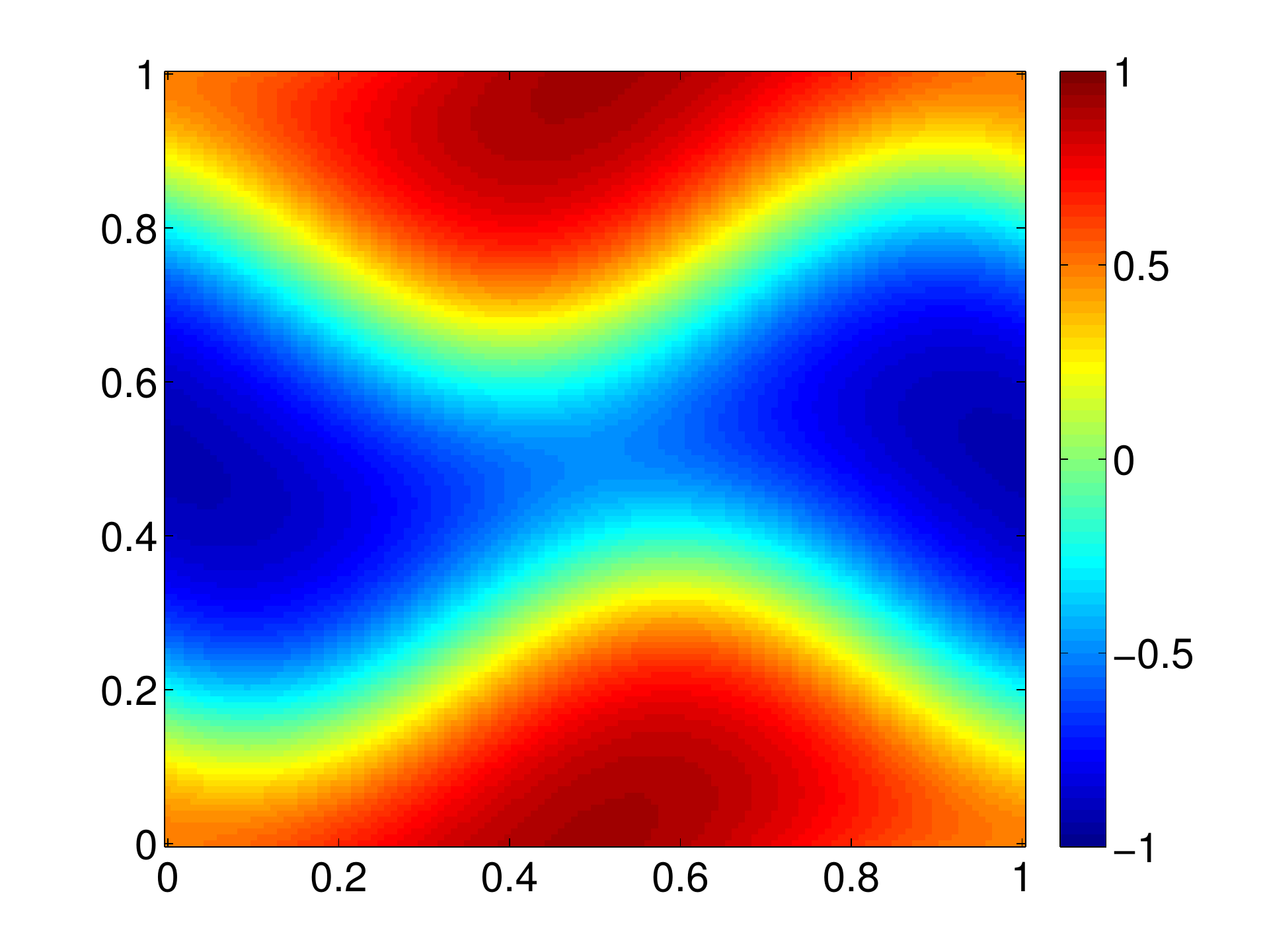}
\caption{$\gamma=0.05$}
\label{fig:2ndshear005}
\end{subfigure}
\begin{subfigure}[b]{0.18\textwidth}
\includegraphics[width=\textwidth]{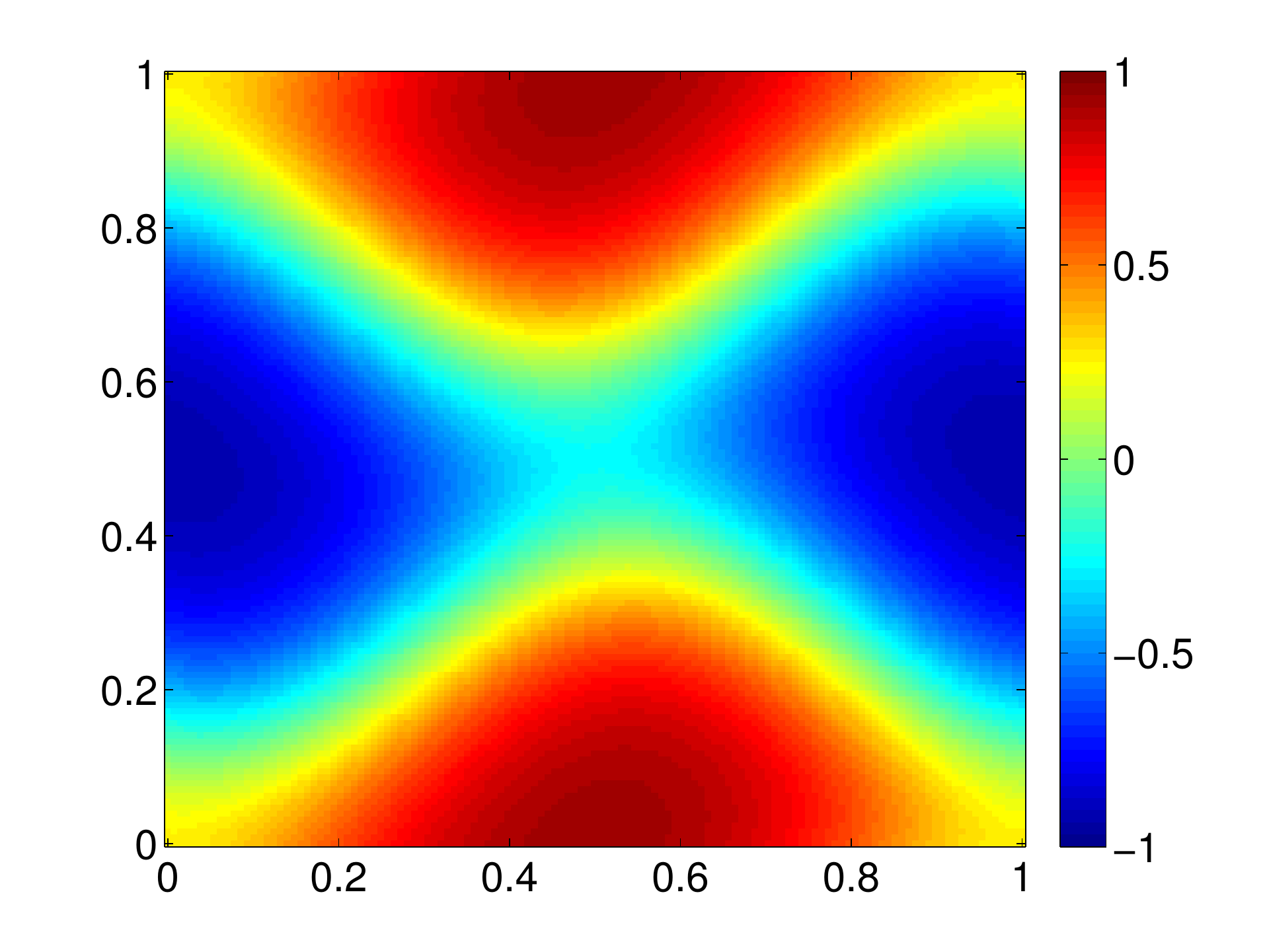}
\caption{$\gamma=0.0575$}
\label{fig:2ndshear00575}
\end{subfigure}
\begin{subfigure}[b]{0.18\textwidth}
\includegraphics[width=\textwidth]{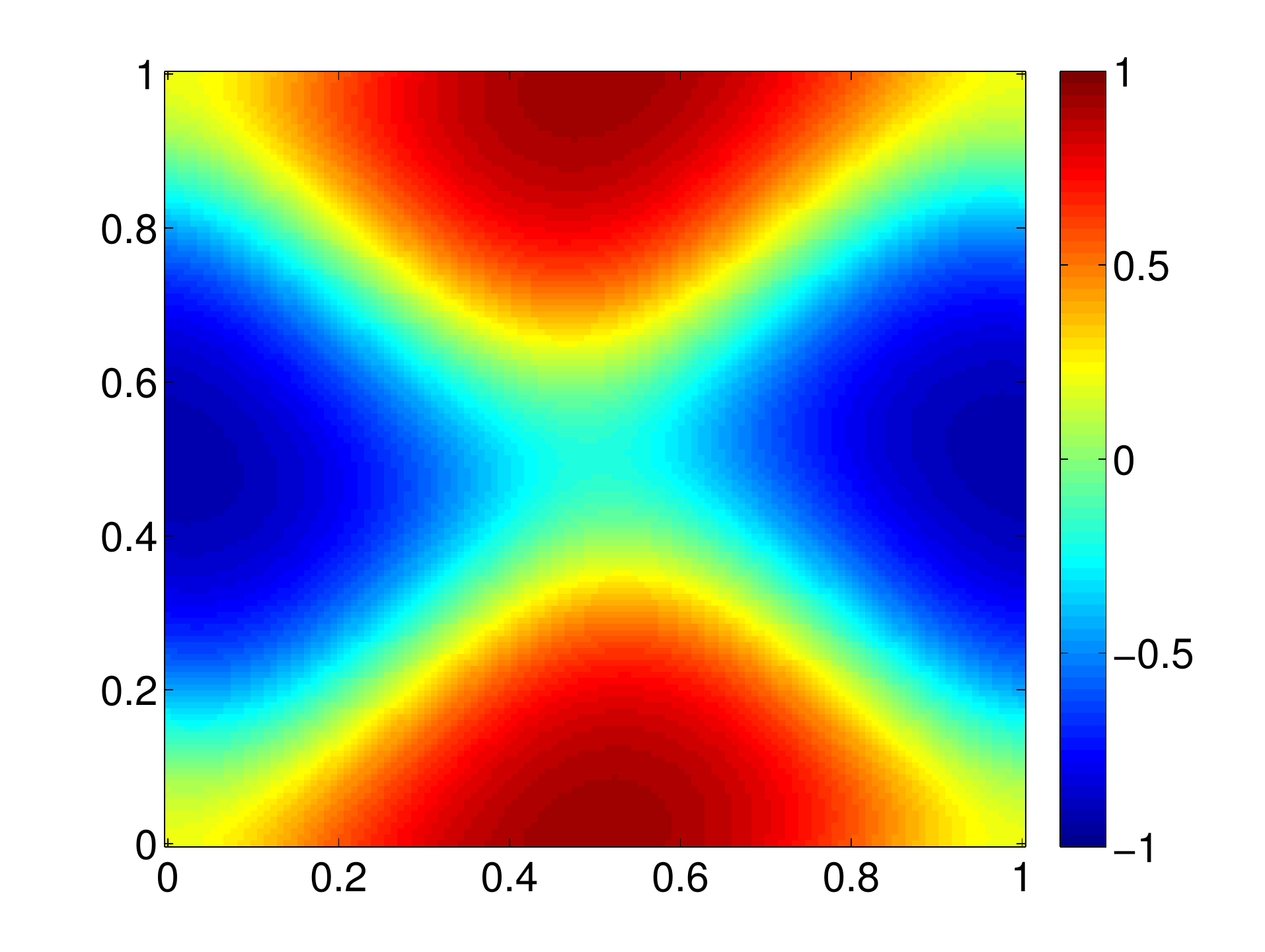}
\caption{$\gamma=0.0585$}
\label{fig:2ndshear00585}
\end{subfigure}
\begin{subfigure}[b]{0.18\textwidth}
\includegraphics[width=\textwidth]{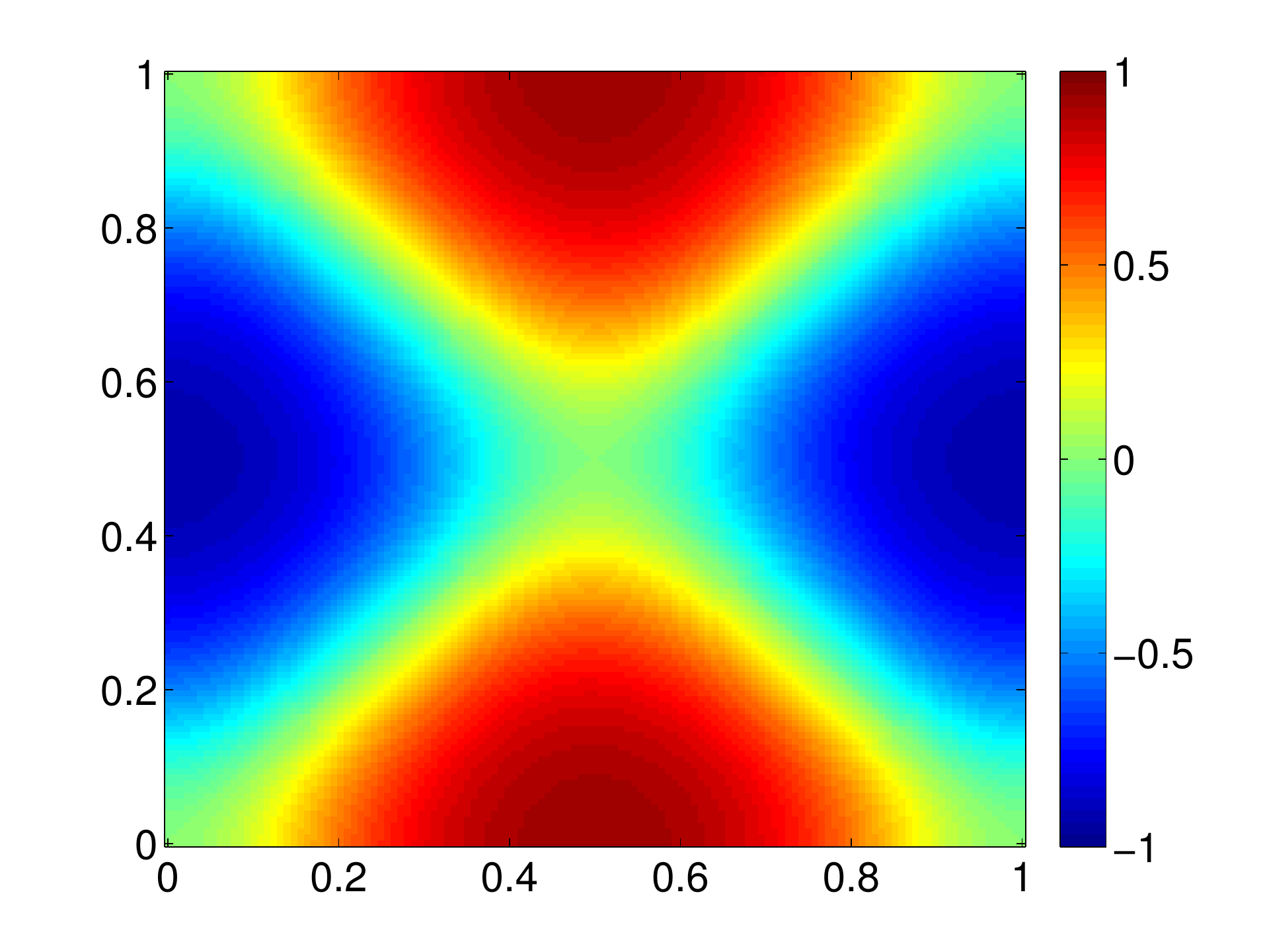}
\caption{$\gamma=0.1$}
\label{fig:2ndshear01}
\end{subfigure}
\caption{Saddle points  for the model \eqref{GL_dyna_new}.
}
\label{fig:2D_saddle_new}
\end{center}
\end{figure}

%

   For the second shear case in equation \eqref{GL_dyna_new}, the shear flow is no longer
   restricted in certain direction and is more general. In this case, the transition states with various shear rates are shown in Figure \ref{fig:2D_saddle_new}.
  The shear  ``twists''
  the profiles again but in different patterns.
 Similarly to the first case,  the saddle point is
 finally unchanged  when $\gamma$ is  sufficiently large. And eventually, the saddle point forms an ``X'' shape.
  But for small shear rate,
 the ``X''  shaped saddle point in Figure \ref{fig:2ndshear01}
 does not exit, unlike the lamellar phase in the previous shear case. Thus,
  it seems  to have only one  index-1 saddle point at any $\gamma$,
 except for the symmetric images.   
In summary, the shear acting on the Ginzburg-Landau energy landscape induces a variety of different patterns of the saddle points
and transition mechanisms. Our simplified GAD offers a useful tool for locating these saddle points with economic computational costs.

\section{Concluding Remarks}\label{sec_conclu}
    We present  a simplified  GAD for the non-gradient system in $\Real^d$
    to search   saddle points.
    It is a flow in $\Real^{2d}$ rather than in $\Real^{3d}$.
      Only one direction variable and one position variable are required in this new GAD.  So, it has the same computational cost as
  the  GAD   for the gradient system.
  Although we only show the result  for index-1 saddle points in this paper,
  it is not difficult to extend to index-$k$ saddle points by   following the
  approach in the original GAD paper \cite{GAD2011}.

    Our   numerical tests include the   Allen-Cahn equation in
  a periodic box with the presence of shear flow and we find the changes of saddle points  when the system is   subject to the various shear flows.
 In the end, we need to point out that although index-1 saddle points
 seem important for the  rare-event transitions in the non-gradient systems,  the saddle point found by the GAD may not
 directly be the true transition state. To quantify the minimal action, the minimum action method may still be  necessary to evaluate the action
 to the saddle points.
One may combine the MAM for the path and  the GAD for the final ending point on the path
to construct a hybrid method, in a similar style to
the climbing string method\cite{Ren2013} for the gradient system.
 
\bibliography{./simGAD}

\bibliographystyle{siam} 

\end{document}